\author{Johan Andersson\thanks{Email:johan.andersson@oru.se \, Address:Department of Mathematics, School of Science and Technology, {\"O}rebro University, {\"O}rebro, SE-701 82 Sweden. This work was partially funded by the Royal Swedish academy of sciences (A Gustaf Sigurd Magnuson grant).}}
\theoremstyle{plain}
\newtheorem{thm}{Theorem}
\newtheorem{cor}{Corollary}
\newtheorem{lem}{Lemma}
\theoremstyle{definition}
\newcommand{\nicefrac}[2]{\leavevmode\kern.1em
\raise.5ex\hbox{\the\scriptfont0 #1}\kern-.1em
/\kern-.15em\lower.25ex\hbox{\the\scriptfont0 #2}}
\def\halv{\mathchoice{{\textstyle{\frac 1 2}}}{1/2}{1/2}{1/2}}
\newcommand{\R}{{\mathbb R}} 
\newcommand{\abs}[1]{{\left| {#1} \right|}} \newcommand{\p}[1]{{\left(
      {#1} \right)}} 
\newcommand{\Oh}[1]{{O \p{#1}}}
\renewcommand{\Im}{\operatorname{Im}}
\newtheorem{vanlem}{Vanishing Lemma}
\newtheorem{prob}{Problem}
\renewcommand{\Re}{\operatorname{Re}}
\begin{document}

\date{}

\title{On the Balasubramanian-Ramachandra method close to $\Re(s)=1$}
\maketitle

\begin{abstract}
We study the problem on how to get good lower estimates for the integral
\begin{gather*}
 \int_T^{T+H} |\zeta(\sigma+it)| dt,
\end{gather*}
when $H \ll 1$ is small and $\sigma$ is close to $1$, as well as related integrals for other Dirichlet series, by using ideas related to the Balasubramanian-Ramachandra method. We use kernel-functions constructed by the Paley-Wiener theorem as well as the kernel function of Ramachandra.  We also notice that the Fourier transform of Ramachandra's Kernel-function is in fact a $K$-Bessel function. This simplifies some aspects of Balasubramanian-Ramachandra method since it allows use of the  theory of Bessel-functions.
 \end{abstract}

\tableofcontents

\section{Introduction and Main results}

 In a recent paper \cite{Andersson4} we proved that
\begin{gather*} 
  \inf_{|a_n| \leq \Phi(n)} \int_0^H \abs{1+\sum_{n=2}^N a_n n^{-it-1}} dt >0, \\ \intertext{if and only if}
 \int_2^\infty \frac{\log \Phi(x)} {x \log^2 x} dx < \infty,
\end{gather*}
whenever $\Phi(x)$ is an increasing positive function. This gives a strong answer to a question originally posed by Ramachandra \cite{Ramachandra2} (and solved in a weaker version in \cite{Andersson}), and has applications on lower bounds for the Riemann zeta-function close to $\Re(s)=1$. For example it implies that
\begin{gather} \label{aq}
   \int_T^{T+H} \abs{\zeta(\sigma+it)} dt \geq  C_{H,\varepsilon},   
   \qquad (1- \sigma <  (\log \log T)^{-\varepsilon-1}).
\end{gather}
To prove this we used integral kernels coming from a construction of Paley and Wiener \cite{PalWien}. We also used the following result:

\begin{vanlem} Any Dirichlet series that is identically zero on an interval of absolute convergence is identically zero on the complex plane. \end{vanlem}

In another direction Balasubramanian and Ramachandra devised a method (See for example \cite{Ramachandra}) which implies the following results:
\begin{align} \label{aq2}
    \max_{t \in [T,T+H]} \abs{\zeta(1+it)} &\gg  \log \log H,   \qquad (H \geq C_0 ), \\ \intertext{on the line $\Re(s)=1$,  for $1/2<\sigma<1$} \label{aq2b}    \max_{t \in [T,T+H]} \abs{\zeta(\sigma+it)} &\gg   \exp\p{\frac{C_\sigma(\log H)^{1-\sigma}}{\log \log H}}, \qquad (H \geq C_1 \log \log T), 
\end{align}
for some positive constants $C_\sigma,C_1$ as well as  other important results such as good omega-estimates (the same order of magnitude as the conjectured upper bounds) for higher power moments of the Riemann zeta-function on the critical line. It follows from an easy application of Voronin Universality, see e.g. \cite{Andersson,Andersson3,Andersson6} that
\begin{gather*}
  \inf_T \max_{t \in [T,T+H]} \abs{\zeta(\sigma+it)} =0,
\end{gather*}
for any $1/2<\sigma<1$ and thus in order for \eqref{aq2b} to be true, $H$ must be an increasing function of $T$. One of our aims is to prove new results that are in some sense intermediate to \eqref{aq2} and \eqref{aq2b}, when $\sigma$ is in the critical strip, but close to $1$. An important distinction is that our main interest lies in finding related results for small $H$, in particular when $H \to 0$. An example of such a result is the following theorem:

\begin{thm} 
  Let $T>16$ and $\varepsilon,C>0$. Then we have that
  \begin{gather*}
   \int_T^{T+H} \abs{\zeta(\sigma+it)} dt \gg \min(H^{2+\varepsilon},H), \, \, \, \, \, \,   \p{\sigma \geq 1-CH(\log \log T)^{-1-\varepsilon}}.
  \end{gather*}
 \end{thm}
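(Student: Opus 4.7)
The plan is to adapt the Paley--Wiener kernel construction of \cite{Andersson4}, tracking the dependence of the implicit constant on $H$. I would first reduce to the small-$H$ case: when $H \geq 1$ the conclusion reads $\int_T^{T+H}|\zeta(\sigma+it)|dt \gg H$, which follows by partitioning $[T,T+H]$ into $\sim H$ unit subintervals and applying \eqref{aq} on each. On any such subinterval the hypothesis $\sigma \geq 1 - CH(\log\log T)^{-1-\varepsilon}$ is at least as restrictive as the hypothesis of \eqref{aq} (after adjusting constants), so we obtain a uniform positive lower bound per subinterval.

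For the substantive case $H \leq 1$, the goal is $\int_T^{T+H}|\zeta(\sigma+it)|dt \gg H^{2+\varepsilon}$. Following the pattern of \cite{Andersson4}, I would construct a Paley--Wiener kernel $K$ supported on $[0,H]$, normalised so that $\hat K(0) = \int K \asymp H$, with $\|K\|_\infty \lesssim H^{-1-\varepsilon}$, and with $\hat K$ enjoying rapid (Beurling--Malliavin-type) decay on the real line. From the trivial inequality
\[
\int_0^H |\zeta(\sigma+iT+it)|\, dt \;\geq\; \|K\|_\infty^{-1} \left|\int_0^H \zeta(\sigma+iT+it)\,K(t)\,dt\right|
\]
the task reduces to a lower bound on $|\int K\zeta|$. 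Expanding $\zeta$ as a truncated Dirichlet polynomial (with $N$ a suitable power of $T$ so the tail is negligible) yields
\[
\int_0^H \zeta(\sigma+iT+it)\,K(t)\,dt \;=\; \hat K(0) \;+\; \sum_{n=2}^N n^{-\sigma-iT}\,\hat K(\log n) \;+\; o(H),
\]
and the aim is to dominate the residual sum by $\hat K(0)/2 \asymp H/2$, obtaining $|\int K\zeta| \gg H$ and hence $\int|\zeta| \gg H \cdot \|K\|_\infty^{-1} \gg H^{2+\varepsilon}$.

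The hardest step is the simultaneous control of all the properties of $K$: the decay of $\hat K$ on the real line must be strong enough that $\sum_{n\geq 2} n^{-\sigma}|\hat K(\log n)| \leq \hat K(0)/2$ uniformly for $\sigma$ throughout the stated range, while $\|K\|_\infty$ remains $O(H^{-1-\varepsilon})$. The hypothesis $\sigma \geq 1 - CH(\log\log T)^{-1-\varepsilon}$ is precisely the threshold at which this balance can be struck: if $1-\sigma$ were appreciably larger, the weights $n^{-\sigma}$ would grow too fast to be absorbed by the best decay a kernel supported on $[0,H]$ can have. The remark in the abstract that the Fourier transform of Ramachandra's kernel is a $K$-Bessel function suggests that a rescaled Ramachandra kernel, exploiting the exponential decay of $K_\nu$, is a natural candidate with which to perform the trade-off.
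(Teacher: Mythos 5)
Your skeleton (reduce to a Dirichlet polynomial via an approximate functional equation, convolve with a Paley--Wiener kernel at scale $H$, compare $\int K\zeta$ with $\hat K(0)$) is the same as the paper's, but the one idea that makes the argument work is missing: the mollifier. The step ``dominate $\sum_{n\geq 2}n^{-\sigma}\hat K(\log n)$ by $\hat K(0)/2$'' cannot be carried out for \emph{any} admissible kernel. Since $K$ is supported on an interval of length $H$, its Fourier transform is of exponential type $H$ and is essentially undiminished for $|x|\ll 1/H$; hence $\hat K(\log n)\approx\hat K(0)$ for every $n\leq e^{c/H}$, and already these terms contribute $\asymp\hat K(0)\sum_{n\leq e^{c/H}}n^{-1}\asymp\hat K(0)/H$, which is $\gg 1\gg\hat K(0)\asymp H$. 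Even past that threshold the best Paley--Wiener decay one can have, $|\hat K(x)|\leq\exp(-Hx\,\omega(x))$ with $\int_1^\infty\omega(x)x^{-1}\,dx<\infty$, gives terms of size roughly $n^{-1-H\omega(\log n)}$, whose sum is of order $1/(H\omega)$, not $o(H)$. So the residual sum swamps the main term no matter how you tune $K$, and no choice of $\|K\|_\infty$ fixes this (for a kernel with $\int_0^HK\asymp H$ on $[0,H]$ one has $\|K\|_\infty\geq\tfrac1H\int_0^HK\asymp 1$ anyway, so the factor $\|K\|_\infty^{-1}$ cannot be where the $H^{1+\varepsilon}$ loss comes from).

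What the paper does instead is multiply $\zeta_T(s+iT)$ by the mollifier $M_X(s+iT)$ with $M_X(s)=\sum_{n\leq X}\mu(n)n^{-s}$ and $X=\exp\bigl(|\log H|/(H\omega(|\log H|))\bigr)$. The product $A(s)=\sum a_nn^{-s}$ has $a_1=1$, $a_n=0$ for $2\leq n<X$ and $|a_n|\leq d(n)$, so the residual sum runs only over $n\geq X$, precisely where the decay $\hat\phi(H\log n)$ has come down to size $O(H^3(H\log n)^{-3})$; this yields $\int_0^H|A(\sigma+it)|\,dt\geq\pi c_0H$. The loss factor you want to attribute to $\|K\|_\infty$ is in fact $\max_t|M_X(\sigma+it)|\ll|\log H|/(H\omega(|\log H|))\ll H^{-1-\varepsilon}$, paid when dividing back to recover $\int|\zeta_T|$. (The paper proves this for a general weight $\omega$ as Theorem 4 and specializes $\omega(x)=(1+\log x)^{-1-\varepsilon}$.) A secondary issue: your reduction of the case $H\geq1$ to \eqref{aq} on unit subintervals only covers $\sigma\geq1-c(\log\log T)^{-1-\varepsilon}$, whereas the theorem's admissible range widens linearly in $H$; the paper instead invokes Theorem 3, whose range of $\sigma$ does scale with $H$.
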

In particular this result improves on \eqref{aq} by giving explicit estimates for $C_{H,\varepsilon}$ as $H$ tends to zero. A crucial part of the  proof of Theorem 1 is to use a standard Mollifier. It should be possible to use the same idea whenever we have an Euler-product. 

Although Theorem 1 extends to the line $\Re(s)=1$ it gives worse estimates in this case than our recent  \cite[Theorem 3]{Andersson3} surprisingly strong result 
 \begin{gather} \label{ity} \begin{split}
   \inf_T \int_T^{T+H} \abs{\zeta(1+it)} dt =\frac{e^{-\gamma}\pi^2}{24}H^2+\Oh{H^4}, \\ \inf_T \int_T^{T+H} \abs{\zeta(1+it)}^{-1} dt =\frac{e^{-\gamma}}{4}H^2+\Oh{H^4}, \end{split}
  \end{gather}
when $H \to 0^+$.  By continuity however, this theorem can be extended to some sufficiently small region in the critical strip. The Riemann hypotheses together with this result infact implies a stronger result than Theorem 1. Unconditionally we may replace the Riemann hypothesis with the sharpest known zero-free regions to obtain sharper bounds than in Theorem 1 at the expense of a shorter range of $\sigma$.

\begin{thm}
 Assuming the Riemann hypothesis\footnote{In fact the so called quasi Riemann hypothesis suffices. There is some constant $c<1$ such that the Riemann zeta-function has no zeroes for $\Re(s)>c$.} one has that
 \begin{gather*}
   \liminf_{\substack {T \to \infty \\ 1-\omega(T) \leq \sigma \leq 1}} \int_T^{T+H} \abs{\zeta(\sigma+it)} dt =\frac{e^{-\gamma}\pi^2}{24}H^2+\Oh{H^4}, \\ \liminf_{\substack {T \to \infty \\ 1-\omega(T) \leq \sigma \leq 1 }} \int_T^{T+H} \abs{\zeta(\sigma+it)}^{-1} dt =\frac{e^{-\gamma}}{4}H^2+\Oh{H^4}, \\ \intertext{for $0<\omega(T)<1$ such that} 
 \omega(T)= o \p{\frac 1 {\log \log T}}.  \\ \intertext{Unconditionally the same result holds when}
  \omega(T)=o((\log T)^{-2/3}(\log \log T)^{-1/3}). \end{gather*}
\end{thm}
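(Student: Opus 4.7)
The plan is to deduce Theorem 2 from \eqref{ity} by a uniform continuity argument in $\sigma$: the goal is to show that, for $\sigma\in[1-\omega(T),1]$ and $t\in[T,T+H]$, both $\zeta(\sigma+it)$ and $1/\zeta(\sigma+it)$ stay multiplicatively within a factor $1+o(1)$ of their values at $\sigma=1$ as $T\to\infty$. Since the limit in \eqref{ity} is of order $H^2$, such a multiplicative error perturbs the integrals only by $o(H^2)$, which is absorbed by the $\Oh{H^4}$ correction in the limit. The upper bound on the liminf is then immediate: restrict to $\sigma=1$ and invoke \eqref{ity}. Hence everything reduces to producing a matching lower bound.

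For that, I would start from
\begin{gather*}
 \log\zeta(1+it)-\log\zeta(\sigma+it)=\int_\sigma^1\frac{\zeta'}{\zeta}(u+it)\,du,
\end{gather*}
and bound the integrand uniformly on the rectangle $1-\omega(T)\le u\le 1$, $T\le t\le T+H$. Under the Riemann hypothesis (or the quasi-RH stated in the footnote), $\log\zeta$ is regular on a half-plane $\Re(s)>c$, and Littlewood's classical bound gives $\abs{\log\zeta(u_0+it)}\ll\log\log t$ uniformly on any fixed line $u_0>c$. Applying Cauchy's formula to $\log\zeta$ on a disk of fixed radius centered at $u+it$ then yields $\zeta'/\zeta(u+it)\ll\log\log t$ uniformly for $u$ in a neighborhood of $1$. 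Integrating produces
\begin{gather*}
\abs{\log\zeta(\sigma+it)-\log\zeta(1+it)}\ll\omega(T)\log\log T=o(1),
\end{gather*}
so that both $\zeta(\sigma+it)/\zeta(1+it)$ and its reciprocal equal $1+o(1)$ uniformly. Consequently
\begin{gather*}
\int_T^{T+H}\abs{\zeta(\sigma+it)}\,dt=(1+o(1))\int_T^{T+H}\abs{\zeta(1+it)}\,dt,
\end{gather*}
and the analogous identity holds for the reciprocal integrand; combining with \eqref{ity} furnishes the matching lower bound in both cases.

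The unconditional case is structurally identical: inside the Vinogradov-Korobov zero-free region, $\log\zeta$ is regular and the best available bound is $\zeta'/\zeta(u+it)\ll(\log t)^{2/3}(\log\log t)^{1/3}$, so the integrated estimate becomes $\omega(T)(\log T)^{2/3}(\log\log T)^{1/3}=o(1)$, which is exactly the stated hypothesis on $\omega(T)$.

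The main obstacle I anticipate is ensuring that the $o(1)$ in the continuity estimate depends only on $T$ (through $\omega(T)$), and not on the specific $\sigma\in[1-\omega(T),1]$ chosen; only under such uniformity does the double liminf over $(T,\sigma)$ genuinely collapse to the one-variable liminf at $\sigma=1$ from \eqref{ity}. A subsidiary point is that the Littlewood/Cauchy bound must be applied uniformly for $t$ in the short interval $[T,T+H]$, but since $H$ is bounded and the bound depends on $t$ only through $\log\log t$, this is harmless.
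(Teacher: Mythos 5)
Your overall strategy is exactly the paper's: show that for $\sigma\in[1-\omega(T),1]$ the value $\zeta(\sigma+it)$ is multiplicatively within $1+o(1)$ of $\zeta(1+it)$ by integrating a bound for $\zeta'/\zeta$ over the segment $[\sigma,1]$, and then import the asymptotics from \eqref{ity}; the unconditional case is handled the same way with the Vinogradov--Korobov bound. The key estimate you need, $\zeta'/\zeta(u+it)\ll\log\log t$ uniformly for $u\geq 1-O(1/\log\log t)$ under RH, is true and is precisely what the paper quotes from Titchmarsh (p.~383, in the form $\zeta'/\zeta(\sigma+it)\ll((\log t)^{2-2\sigma}-1)/(1-\sigma)$, which reduces to $\ll\log\log t$ in this range).

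The one step of yours that would fail as written is the derivation of that estimate by applying Cauchy's formula to $\log\zeta$ on a disk of \emph{fixed} radius $r$ centered at $u+it$. Littlewood's bound $\log\zeta\ll\log\log t$ holds only on (or within $O(1/\log\log t)$ of) the line $\Re(s)=1$; on the line $\Re(s)=1-r$ the RH bound for $|\log\zeta|$ is of size a power of $\log t$ (roughly $(\log t)^{2r}$ up to logarithms), so a fixed-radius Cauchy estimate yields only $\zeta'/\zeta\ll(\log t)^{2r}$, far too weak, while shrinking the radius to $\asymp 1/\log\log t$ yields $(\log\log t)^{2}$, which still destroys the conclusion since $\omega(T)(\log\log T)^2$ need not be $o(1)$. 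The remedy is simply to cite the standard conditional estimate directly rather than rederive it; with that substitution your argument coincides with the paper's proof, including the uniformity in $\sigma$ and $t$ that you correctly flag as the point requiring care.
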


\begin{proof}
 The conditional result is a consequence of \eqref{ity} and  Titchmarsh \cite[p. 383, last equation]{Titchmarsh}\begin{gather*}  
 \frac{\zeta'(\sigma+it)} {\zeta(\sigma+it)} \ll \frac{(\log t)^{2-2 \sigma}-1}{1-\sigma}, \qquad (1/2<\sigma_0\leq \sigma < 1, \text{ Assuming RH}), \\ \intertext{from which it follows that}
  \abs{\zeta(1+it)-\zeta(\sigma+it)} \ll \delta \abs{\zeta(1+it)},\qquad ( 1-\delta/\log \log t \leq \sigma \leq  1).
\end{gather*}
The unconditional result follows in a similar way by using the unconditional result
 \begin{gather*}
 \frac{\zeta'(\sigma+it)} {\zeta(\sigma+it)} \ll (\log t)^{2/3} (\log \log t)^{1/3}, \, \, \, \, \,  (1-A (\log t)^{-2/3} (\log \log t)^{-1/3} \leq \sigma  \leq 1),
\end{gather*}
 see e.g. the discussion by Heath-Brown \cite[p.135]{Titchmarsh}. This estimate is a consequence of the unconditional zero free regions of Vinogradov \cite{Vinogradov} and Korobov \cite{Korobov}.   The strongest constants in the zero free region is due to  Ford \cite{Ford}. 
From this inequality it follows that 
\begin{gather*}
  \abs{\zeta(1+it)-\zeta(\sigma+it)} \ll \delta \abs{\zeta(1+it)}, \,  \, \, \, ( 1-A\delta/((\log t)^{2/3}(\log \log t)^{1/3}) \leq \sigma \leq  1),
\end{gather*}
from which the unconditional result follows.
\end{proof}

For the case when we do not have an Euler product we obtain even weaker estimates in $H$.  However, by a quantitative variant of the vanishing Lemma proved in \cite{Andersson15} we will be able to treat this case as well. Also, instead of the construction of Paley and Wiener, we will use  the integral Kernel introduced by Ramachandra (see for example \cite{Ramachandra}), which gives the sharpest range in the Balasubramanian-Ramachandra method.  We choose as a prototype case the Hurwitz zeta-function, although the result can be proved in a more general context, like that of Titchmars series.  For the Hurwitz and Lerch zeta-functions we have the following result
\begin{thm} 
 Let $\zeta(\sigma+it,\alpha)$ with $0<\alpha \leq 1$ be the Hurwitz zeta-function. Then for $T \geq 16$ one has that
\begin{gather*}
 \int_{T}^{T+H} \abs{\zeta(\sigma+it,\alpha)}dt   \gg  \min\p{H,\p{\frac {H} {200}}^{\frac 7 {6H \varepsilon}}}, \\ \intertext{whenever}
     \sigma \geq  1-\frac{\pi H(1-\varepsilon)}{4 \log \log T},
\end{gather*}
and $0<\varepsilon \leq 1$. Furthermore the same estimate is valid when the Hurwitz zeta-function $\zeta(\sigma+it,\alpha)$ is replaced by the Lerch zeta-function $\phi(\alpha,\beta,\sigma+it)$ with $0<\alpha,\beta  \leq 1$.
\end{thm}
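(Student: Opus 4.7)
My plan is to combine three ingredients: Ramachandra's kernel function, a Balasubramanian--Ramachandra-style contour shift, and the quantitative vanishing lemma of \cite{Andersson15}. The key obstacle compared with Theorem~1 is that neither the Hurwitz nor the Lerch zeta-function has an Euler product, so the mollifier device that produced the polynomial-in-$H$ bounds there is unavailable. The quantitative vanishing lemma is what replaces it, but at the cost of a much weaker $H$-dependence, which is why the second term in the $\min$ is only of the shape $(H/200)^{c/H}$ rather than a power of $H$.

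I would take Ramachandra's kernel $K(s)$ centered around $s = \sigma + iT$ with width parameter $H$. The observation recorded in the abstract --- that the Fourier transform of $K$ is a $K$-Bessel function --- provides precise control over the multiplier $\widetilde{K}(y) := \int K(\sigma+it)\, y^{-it}\, dt$. The relevant feature is that $\widetilde{K}(y)$ decays rapidly once $y > N$ with $\log N \asymp \log \log T$, and the precise $K$-Bessel asymptotics pin down the admissible range of $\sigma$ to exactly $1-\sigma \leq \pi H(1-\varepsilon)/(4\log\log T)$, which is the natural limit of the Balasubramanian--Ramachandra method in this regime.

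Next I would form
\begin{gather*}
I = \int_{(\sigma)} K(s)\, \zeta(s,\alpha)\, ds,
\end{gather*}
and estimate it in two ways. The trivial upper bound is $|I| \ll \int_T^{T+H}|\zeta(\sigma+it,\alpha)|\,dt$ plus negligible tails from the rapid decay of $K$. Shifting the contour to some line $\Re(s)=1+\delta$ where the Dirichlet series $\zeta(s,\alpha) = \sum_{n=0}^{\infty}(n+\alpha)^{-s}$ converges absolutely, collecting the residue at the simple pole $s=1$, and invoking the effective truncation provided by $\widetilde{K}$, yields
\begin{gather*}
I = \sum_{n+\alpha \leq N}(n+\alpha)^{-\sigma - iT}\,\widetilde{K}(n+\alpha) + \text{(residue and small error)}.
\end{gather*}
The main term is a generalized Dirichlet polynomial in $T$ with bounded coefficients $\widetilde{K}(n+\alpha)$ and a dominant constant term coming from $n=0$, namely $\alpha^{-\sigma-iT}\widetilde{K}(\alpha)$. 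Lower-bounding $|I|$ has thus been reduced to lower-bounding the size of this polynomial somewhere on $[T,T+H]$.

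This is exactly what the quantitative vanishing lemma of \cite{Andersson15} supplies: it converts the normalization $\widetilde{K}(\alpha) \asymp 1$ into an explicit lower bound of the form $(H/C)^{c/H}$ on $[T,T+H]$, and tracking constants through --- with $\varepsilon$ entering via the slack in the range of $\sigma$ --- should produce exactly $(H/200)^{7/(6H\varepsilon)}$. For the Lerch zeta-function $\phi(\alpha,\beta,s)$ the identical argument goes through with each $(n+\alpha)^{-s}$ carrying an extra factor $e^{2\pi i\beta n}$, since the vanishing lemma in \cite{Andersson15} applies to generalized Dirichlet polynomials of exactly this type. The main technical obstacle I anticipate is this final bookkeeping step: calibrating the kernel width, the truncation length $N \asymp \log T$, and the residue contribution at $s=1$ so that the explicit constants in the vanishing lemma combine with the $K$-Bessel asymptotics to reproduce precisely the claimed $7/6$ and $200$, rather than merely qualitatively comparable constants.
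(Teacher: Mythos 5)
Your high-level ingredients are the right ones --- Ramachandra's kernel, its $K$-Bessel transform, and the quantitative lower bound for generalized Dirichlet series with leading term $\alpha^{-s}$ (Lemma 15 of \cite{Andersson6}, restated here as Lemma 5) --- and the calibration $1-\sigma\ll H/\log\log T$ is correctly identified. But the middle of your argument has a structural gap. For each fixed $T$ your $I$ is a single number, and the generalized Dirichlet polynomial $\sum_n\widetilde K(n+\alpha)(n+\alpha)^{-\sigma-iT}$ can perfectly well vanish at individual values of $T$; no pointwise lower bound is available, and the vanishing lemma does not supply one. What it supplies is a lower bound for $\int_T^{T+\delta}\abs{\,\cdot\,}\,d\tau$ over an interval of length $\delta$, and only for series that are effectively in the region of absolute convergence. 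The paper therefore runs a \emph{double} integration (Lemmas 5--7): the kernel convolution in $t$ produces a new series with coefficients $b_n=a_nK_{i\lambda\log((n+\alpha)/\alpha)}(2)$; by Lemma 2 these Bessel factors decay like $((n+\alpha)/\alpha)^{-\pi\lambda/2}$, and the choice $\lambda=\tfrac{2}{\pi}(1-\sigma)$ makes this decay exactly cancel the deficit $1-\sigma$, so that Lemma 5 applies with coefficient bound $\sup_{t\geq 0}\abs{K_{it}(2)e^{\pi t/2}}/K_0(2)=13.917$ (whence the $194$ in Lemmas 6--8 and ultimately the $200$). One then applies Lemma 5 to $\int_T^{T+\delta}\abs{\sum b_n(n+\alpha)^{-\sigma-i\tau}}\,d\tau$, unfolds by Fubini, and uses the tail estimate for $e^{-2\cosh(t/\lambda)}$ (Lemmas 3--4) to confine the $t$-integration to a window of length $\Delta=\tfrac{4}{\pi}(1-\sigma)\log\log N$; the split $\delta=\varepsilon H$, $\Delta\leq(1-\varepsilon)H$ is what produces both the exponent $7/(6H\varepsilon)$ and the stated range of $\sigma$. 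Your sketch collapses these two integrations into one and cannot be completed as written.

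Two smaller but real issues. First, the truncation length is $N\asymp T$ (the Hurwitz and Lerch functions are replaced by Dirichlet polynomials of length $T$ via an approximate functional equation, exactly as in \eqref{approx}), not $\log N\asymp\log\log T$ nor $N\asymp\log T$; the $\log\log T$ enters only as $\log\log N$ in the width $\Delta$. Second, your contour-shift formulation crosses the pole of $\zeta(s,\alpha)$ at $s=1$, and the resulting residue is neither negligible compared with the $n=0$ term nor of the form $c\mu^{-iT}$, so it cannot be absorbed into the generalized Dirichlet polynomial fed to the vanishing lemma. The paper avoids this entirely by passing to the approximate functional equation first and then convolving in the $t$-variable (Theorem 5), so no pole is ever crossed; if you insist on the contour-shift version you must show this residue term is dominated, which is not automatic.
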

We remark that this result not just gives an explicit estimate for \eqref{aq} but the use of the integral kernel of Ramachandra allows us to  improve on the range for $\sigma$ where it is valid. Thus it also gives stronger estimates for the Riemann zeta-function case than Theorem 1 when a wider range of $\sigma$ is considered, at the expense of a obtaining a weaker lower estimate in $H$. Assuming the Riemann hypothesis however, the same arguments used to prove Theorem 2 gives stronger results.

\section{The multiplicative case}

We will infact prove a stronger result, from which Theorem 1 is an immediate consequence.

\begin{thm}  
  Suppose $x\omega(x)$ and $1/\omega(x)$ are  increasing positive functions  for $x \geq 1$ such that $\omega(x)=1$ for $0 \leq x \leq 1$ and
 \begin{gather*}
   \int_1^\infty \frac {\omega(x) dx } {x}<\infty. 
 \end{gather*}
   Then one has for $0 \leq H \leq 1$ and $T\geq 1$  that
  \begin{gather*}
   \int_T^{T+H} \abs{\zeta(\sigma+it)} dt \gg  \frac{H^2  \omega(\abs{\log H})}{1+\abs{\log H}}, \qquad  \p{\sigma \geq 1-  H \omega\p{H \log T}}.
  \end{gather*}
 \end{thm}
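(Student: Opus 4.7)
The proof follows the mollifier-with-kernel strategy employed in \cite{Andersson4}, now rendered quantitative in $H$. The two ingredients will be a truncated Möbius mollifier
$$
M_N(s)=\sum_{n\leq N}\frac{\mu(n)}{n^s},
$$
with a parameter $N=N(H,T,\omega)$ to be chosen, and a Paley--Wiener type kernel $\psi$ supported in $[0,H]$ whose Fourier transform $\Psi(y)=\int_0^H\psi(u)e^{-iuy}\,du$ satisfies $\Psi(0)=1$ and decays rapidly on the real line. The hypothesis $\int_1^\infty \omega(x)x^{-1}\,dx<\infty$ is precisely the Paley--Wiener side condition that guarantees the existence of such a $\psi$, roughly with $|\Psi(y)|\ll e^{-c|y|\omega(|y|)}$ and a controlled sup-norm; the monotonicity hypotheses on $x\omega(x)$ and $1/\omega(x)$ are what will let this decay be stated cleanly in $|y|$.

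The identity $\zeta(s)M_N(s)=1+\sum_{m>N}a_m m^{-s}$, with $|a_m|\leq d(m)$, combined with term-by-term integration of the Dirichlet series gives
$$
I:=\int_T^{T+H}\zeta(\sigma+it)M_N(\sigma+it)\psi(t-T)\,dt=1+\sum_{m>N}a_m m^{-\sigma-iT}\Psi(\log m).
$$
I would pick $N$ so that the tail, majorised by $\sum_{m>N}d(m)m^{-\sigma}|\Psi(\log m)|$, is at most $1/2$; here the range hypothesis $\sigma\geq 1-H\omega(H\log T)$ together with the decay of $\Psi$ and the integrability of $\omega(x)/x$ is exactly what is needed. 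The natural calibration is $\log N$ of order $1/(H\omega(H\log T))$, so that $N^{1-\sigma}$ stays bounded while the tail becomes negligible. Then $|I|\geq 1/2$, and the trivial inequality
$$
|I|\leq \|\psi\|_\infty\cdot \sup_{t\in[T,T+H]}|M_N(\sigma+it)|\cdot \int_T^{T+H}|\zeta(\sigma+it)|\,dt,
$$
combined with $\sup_t|M_N(\sigma+it)|\leq \sum_{n\leq N}n^{-\sigma}\ll 1+\log N$, rearranges to the desired lower bound.

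The main obstacle will be the quantitative Paley--Wiener construction: producing a $\psi$ supported in $[0,H]$ that simultaneously achieves $\Psi(0)=1$, Fourier decay of shape $|\Psi(y)|\ll e^{-c|y|\omega(|y|)}$ (or an equivalent form driven by $\omega$), and a sup-norm of order $|\log H|/\bigl(H^2\omega(|\log H|)\bigr)$. Once the kernel is in place, the second delicate point is to balance the cut-off $N$ so that the tail of $\zeta M_N$ integrated against $\Psi$ stays under $1/2$ while $|M_N|$ remains $O(\log N)$; the product $1/(\|\psi\|_\infty\log N)$ will then reproduce the factor $H^2\omega(|\log H|)/(1+|\log H|)$ appearing in the statement.
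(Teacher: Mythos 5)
Your overall strategy --- Mobius mollifier times a Paley--Wiener kernel supported on an interval of length $H$, main term $1$ plus a tail, then divide by $\sup|M_N|$ --- is exactly the paper's, but there are two concrete gaps. The first is that you work with the infinite Dirichlet series $\zeta(s)M_N(s)=1+\sum_{m>N}a_m m^{-s}$ directly at $\sigma<1$, where it does not converge, and your proposed majorant for the tail actually diverges: the Paley--Wiener decay $|\Psi(y)|\ll e^{-c|y|\omega(|y|)}$ translates into a factor $m^{-c\omega(\log m)}$, and since $\omega(x)\to 0$ this is eventually weaker than any fixed power of $m$, so $\sum_{m>N}d(m)m^{-\sigma}|\Psi(\log m)|$ is infinite for every $\sigma<1$. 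The hypothesis $\sigma\geq 1-H\omega(H\log T)$ only gives $\sigma+H\omega(H\log n)\geq 1$ for frequencies $n$ up to about $T^2$; beyond that the exponent drops below $1$. The paper therefore first invokes an approximate functional equation (Ivi\'c) to replace $\zeta(\sigma+it+iT)$ by the Dirichlet polynomial $\zeta_T(s)=\sum_{n<T}n^{-s}$ up to $O(T^{-1/2})$, so that $A=\zeta_T M_X$ is a \emph{finite} polynomial of length $T^2$, term-by-term integration is trivially legitimate, and the inequality $n^{-\sigma}\Phi(H\log n)^2\leq n^{-1}$ holds for every term. This truncation is not a technicality you can skip; it is the step that makes the tail finite at all.

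The second issue is your calibration $\log N\asymp 1/\bigl(H\omega(H\log T)\bigr)$, which would make the final bound depend on $T$ rather than produce $H^2\omega(|\log H|)/(1+|\log H|)$. The paper instead takes $\log X=|\log H|/\bigl(H\omega(|\log H|)\bigr)$, a quantity depending on $H$ alone: this is chosen so that the kernel factor satisfies $\Phi(H\log n)\leq H$ for $n\geq X$, whence the tail is $\ll H\sum_{n>X}d(n)n^{-1}(\log n)^{-3}\ll H/\log X=o(H)$, while $\sup_t|M_X(\sigma+it)|\ll\log X=|\log H|/\bigl(H\omega(|\log H|)\bigr)$ divides the main term $\asymp H$ down to exactly the claimed $H^2\omega(|\log H|)/|\log H|$. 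The range condition on $\sigma$ is used only once, to verify $n^{-\sigma}\Phi(H\log n)^2\leq n^{-1}$ for $n\leq T^2$; it plays no role in choosing $X$. With the truncation step inserted and the mollifier length recalibrated, your argument becomes the paper's proof.
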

We first prove a Lemma:
\begin{lem}
 Let $\sigma, T,H$ be given as in Theorem 4. Then there exist a positive test function  $\phi \in C_0^\infty(\R)$ with support on $[0,1]$ such that $\phi(0)=c_0>0$, $0 \leq \phi(t) \leq 1$ and 
 \begin{gather*}
    \abs{\hat \phi(H \log n) n^{-\sigma}} \leq (\log n)^{-3} n^{-1}, \qquad (X \leq n \leq T^2), \\ \intertext{where}
     X=  \exp\p{\frac{\abs{\log H}} {H   \omega(\abs{\log H})}}
 \end{gather*}
\end{lem}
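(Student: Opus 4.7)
The target of the lemma is, after setting $u = H\log n$ and considering the worst case $\sigma = 1 - H\omega(H\log T)$ in the stated range, equivalent to the $\hat\phi$-decay bound
\begin{equation*}
|\hat\phi(u)| \;\leq\; \frac{H^3}{u^3}\,\exp\!\bigl(-\omega(H\log T)\,u\bigr), \qquad u \in \left[\tfrac{|\log H|}{\omega(|\log H|)},\;2H\log T\right].
\end{equation*}
That is, one must produce essentially exponential decay of $\hat\phi$, at rate $\omega(H\log T)$, on a bounded interval of $u$, together with a modest polynomial savings. Because the interval is bounded, this is consistent with $\phi \in C_0^\infty$, but it requires a construction that explicitly exploits the integrability hypothesis on $\omega$.

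The plan is to build $\phi$ as a (possibly infinite) convolution of scaled nonnegative bumps, in the spirit of the Paley-Wiener kernels used in \cite{Andersson4}. Fix $\eta \in C_0^\infty(\R)$ nonnegative, supported in $[0,1]$, with $\int \eta = 1$, and set $\eta_\lambda(t) = \lambda^{-1} \eta(t/\lambda)$. For a positive sequence $(\delta_j)$, let
\begin{equation*}
\phi \;=\; \eta_{\delta_1} * \eta_{\delta_2} * \eta_{\delta_3} * \cdots,
\end{equation*}
so that $\phi$ is nonnegative, smooth, supported in $[0,\sum_j \delta_j]$, with $\int\phi=1$ and $\hat\phi(u)=\prod_j \hat\eta(\delta_j u)$; a final rescaling of $\eta$ can be used to arrange $0 \leq \phi \leq 1$ as well. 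The scales $\delta_j$ are chosen dyadically to mirror the profile of $\omega$: roughly $\delta_j \asymp c\,\omega(2^j)$, with $c$ small enough that $\sum_j \delta_j \leq 1$ (ensuring support in $[0,1]$). This is exactly where the hypothesis $\int_1^\infty \omega(x)/x\,dx < \infty$ enters, since by dyadic comparison $\sum_j \omega(2^j) \asymp \int_1^\infty \omega(x)/x\,dx$.

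With this choice, $|\hat\phi(u)|$ is estimated by splitting the product over the factors with $\delta_j u \leq 1$ (each bounded by $1$) and those with $\delta_j u > 1$ (each bounded by polynomial decay $(\delta_j u)^{-k}$ for any fixed $k$ depending on $\eta$). Taking logarithms and summing over the active indices converts the accumulated polynomial factors into an effective exponential bound whose rate is controlled by $\int_u^\infty \omega(y)/y\,dy$. The main obstacle, and the heart of the argument, is the simultaneous calibration of the $\delta_j$ to meet three competing constraints: (i) $\sum_j \delta_j \leq 1$ for support in $[0,1]$; (ii) activation of the exponential-type decay at the correct threshold $u = H\log X$, which is what forces the explicit definition of $X$; and (iii) attainment of the rate $\omega(H\log T)$ at the upper endpoint $u \sim H\log T$, which is what forces the stated range of $\sigma$. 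Verifying that all three can be met simultaneously, with implicit constants uniform in $H$ and $T$, is the principal technical step.
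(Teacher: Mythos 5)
Your overall strategy --- building $\phi$ as an infinite convolution of nonnegative bumps at scales governed by $\omega$ --- is the standard proof of the Paley--Wiener-type existence theorem that the paper simply invokes as a black box (citing \cite{PalWien}, \cite{Koosis} and Lemma 4 of \cite{Andersson}). The paper's own proof consists only of the elementary verification that a kernel satisfying $|\hat\phi(x)|\le x^{-3}\exp(-5x\omega(x/2))$ yields the stated bound: three powers of $\exp(-x\omega(x/2))$ are $\le H^3$ once $x\ge H\log X$ (this is exactly what the definition of $X$ encodes), converting $(H\log n)^{-3}$ into $(\log n)^{-3}$, while the remaining two powers absorb $n^{1-\sigma}$ via $\sigma\ge 1-H\omega(H\log T)$ and the monotonicity of $\omega$. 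Your opening reduction to a decay bound on $\hat\phi$ is correct and is essentially this verification in disguise, so the substance of your proposal lies entirely in the construction --- which the paper does not carry out --- and that is where the problem is.

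The genuine gap is the calibration of the scales, precisely the step you defer as ``the principal technical step.'' The choice $\delta_j\asymp c\,\omega(2^j)$ does make $\sum_j\delta_j$ converge, but it does not produce the required decay. At frequency $u$ the active factors are those with $\delta_j u>1$, i.e. $\omega(2^j)>1/(cu)$, and each contributes only $O(k\log(cu))$ to $\log(1/|\hat\phi(u)|)$. For the model weight $\omega(x)=(1+\log x)^{-1-\varepsilon}$ --- the one actually used to deduce Theorem 1 --- one has $\delta_j\asymp j^{-1-\varepsilon}$, so the active indices are $j\ll u^{1/(1+\varepsilon)}$ and the total decay is only of size $\exp(-Cu^{1/(1+\varepsilon)}\log u)$, far weaker than the required $\exp(-cu\,\omega(u))=\exp(-cu(\log u)^{-1-\varepsilon})$; at the top of the range $u\asymp H\log T$ this loses essentially the entire exponential and the conclusion of the Lemma fails for the kernel you build. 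The correct calibration takes the scales as reciprocals of the inverse function of $x\mapsto x\omega(x)$: define $v_j$ by $v_j\omega(v_j)=j$ and set $\delta_j\asymp 1/v_j$. Then $\sum_j 1/v_j<\infty$ is again equivalent to $\int_1^\infty\omega(x)x^{-1}\,dx<\infty$ (substitute $t=x\omega(x)$ and integrate by parts), while the number of active factors at frequency $u$ becomes $\asymp u\omega(u)\ge\frac12 u\omega(u/2)$, each contributing at least a fixed constant $k$, which does yield $|\hat\phi(u)|\le u^{-3}\exp(-5u\omega(u/2))$ for $k$ large. So your construction is repairable, but as written it would not prove the Lemma, and the flaw sits exactly in the step you left unverified.
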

\begin{proof}
 By using the inequality $\sigma \geq 1-H\omega(H\log T)$ and the fact that $\omega(x)$ is decreasing it is clear that
\begin{gather}
  \label{sde}
  \frac {1-\sigma} H-\omega(x/2) \leq  \frac {H \omega (H \log T)} H -
   \omega(H \log T) = 0, \qquad (0 \leq x \leq 2H \log T). 
\end{gather}
By the Paley-Wiener theorem's \cite{PalWien}, see also Koosis \cite{Koosis} or for a suitable version see \cite[Lemma 4]{Andersson}, we can choose a positive test function  $\phi \in C_0^\infty(\R)$ with support on $[0,1]$ so that $\hat \phi(0)=c_0 \neq 0$  and such that
\begin{gather} \label{phidef}
 \abs{\hat \phi(x)} \leq    x^{-3} \Phi(x)^5, \qquad \text{where} \qquad \Phi(x)=\exp \p{-x \omega(x/2)}. 
\end{gather} 
From the requirement that $x \omega(x)$ is an increasing function in $x$ we have that $\Phi(x)$ is a decreasing function. It is clear that
$$H \log X = \frac{\abs{\log H}} {\omega(\abs{\log H})}.$$ 
Since $\omega(x)$ is a positive decreasing function for $x \geq 1$ such that $\omega(x)=1$ for $0 \leq x  \leq 1$  it follows that $\omega(x) \leq 1$ for $x \geq 0$ and that $H \log X \geq \log H$. It follows that $\omega(H \log X) \leq \omega(\abs{\log H})$.
Thus we see that
\begin{gather*} 
     \Phi(H \log X) =  \exp \p{- \frac{\abs{\log H}}{\omega(\abs{\log H)}} \, \omega(H \log X)} \leq \exp \p{-\abs{\log H}} = H.
\end{gather*}
Since $\Phi$ is a decreasing function we see that
$\Phi(H \log n)^3 \leq H^3$ for  $n \geq X$ and we obtain
\begin{gather} \label{aaa} (H \log n)^{-3}  \Phi(H \log n)^3 \leq (H \log n)^{-3} H^3 =  (\log n)^{-3}, \qquad (n \geq X). \end{gather}
By \eqref{phidef} it is clear that
\begin{gather} \notag
   n^{-\sigma} \Phi(H \log n)^2 \leq n^{-1}  \exp \p{\frac{(1-\sigma)x}{H} -2x \omega(x)}, \\ \intertext{where $x=H \log n$. Since $1 \leq n\leq T^2$ this means that $0 \leq x<2H \log T$ and we can use the inequality \eqref{sde} to obtain}
 n^{-\sigma} \Phi(H \log n)^2\leq n^{-1}, \label{bbb}
\end{gather}
The lemma follows from combining the inequalities \eqref{phidef}, \eqref{aaa} and \eqref{bbb}.
\end{proof}

\subsection{Proof of Theorem 4}

By a suitable approximate functional equation 
(Ivi{\'c} \cite[Theorem 1.8]{Ivic}) we have that
\begin{gather} \label{approx}
\zeta(\sigma+it+iT)=\zeta_T(\sigma+it)+O(T^{-\halv}),  \qquad (T/2<|t|<T,\sigma \geq 1/2), \\ \intertext{where} \zeta_T(s) =\sum_{1 \leq n < T} n^{-s}. \notag
\end{gather} 
Thus it will be sufficient to consider Dirichlet polynomials instead of Dirichlet series. Let $X$ be defined as in Lemma 1  and introduce the standard Mollifier\footnote{This has also been used by Selberg \cite{Selberg} to show a positive proportion of zeros on the critical line and has also had other important applications such as zero density estimates (See Ivi{\'c} \cite{Ivic}, chapter 11).}:
 $$M_X(s)=\sum_{1 \leq n \leq X} \mu(n) n^{-s}.$$ 
 Without loss of generality we may assume that $X<T$. Define
 \begin{gather} \label{Adef} A(s)=\zeta_T(s+iT) M_X(s+iT)=\sum_{1 \leq n <T^2} a_n n^{-s}. \end{gather}
   It is clear that
 \begin{gather} a_n=\begin{cases} 1, & n=1,\\ 0, & 2 \leq n < X, \end{cases} \\ \intertext{and that} \label{aineq} |a_n| \leq d(n).
\end{gather} 
Now let $\phi(x)$ be the function in Lemma 1. By the definition of the Fourier-transform 
$$
 \hat \phi(x)=\frac 1 {2 \pi}\int_{-\infty}^\infty e^{-itx} \phi(t) dt,
$$
it follows that
$$ \int_{-\infty}^\infty \phi\p{\frac t H} A(\sigma+it)  dt = 2 \pi H \sum_{1 \leq n <T^2} a_n n^{-\sigma} \hat \phi(H \log n).
$$
 Hence by \eqref{aineq} and Lemma 1 we obtain 
\begin{gather} \label{aboj}
  \sum_{n=1}^{T^2} a_n n^{-\sigma}  \hat\phi(H \log  n)= 2 \pi c_0 H + \Oh{H \sum_{X \leq n <T^2} d(n) (\log n)^{-3} n^{-1}}. 
\end{gather}
From the fact that
 \begin{gather} \notag
\zeta^2(s)=\sum_{n=1}^\infty d(n) n^{-s} \\ 
\intertext{is analytic for $\Re(s) \geq 1$, except for a second order pole at $s=1$,  it follows that} 
\label{ina2} \sum_{n>X} \frac{d(n)} n (\log n)^{-3} \ll (\log X)^{-1},  \end{gather}
and  from \eqref{aboj} and \eqref{ina2}  and the choice of $X$ given in Lemma 1 we see that  
\begin{gather*} 
  \int_{-\infty}^\infty \phi \p{\frac t H} A(\sigma+it)  dt = 2 \pi c_0 H + 
\Oh{\frac{H \omega(\abs{\log H})}{\abs{\log H}}}.
\end{gather*}
Since $\omega(x) \leq 1$ and $\lim_{H \to 0^+} \abs{\log H}=\infty$ it it is clear that for sufficiently small $0<H \leq H_0$ the error term will be less than half of the main term and by the triangle inequality and the fact that $\phi$ has support on  $[0,1]$  it follows that
\begin{gather} \label{ajaj}
 \int_0^H \abs{A(\sigma+it)} dt \geq \pi c_0 H , \qquad (0<H \leq H_0).
\\ \intertext{By the definition of $A(s)$, Eq.  \eqref{Adef} it is clear that}
\label{ajajaj} \int_T^{T+H} |\zeta_T(\sigma+it)| dt \geq \frac{\int_0^H |A(\sigma+it)| dt}{\max_{t \in [T,T+H]} |M_X(\sigma+it)|}. \\ \intertext{From the triangle inequality we have}
 \label{ajajajaj}
\abs{M_X(\sigma+it)} \leq  \sum_{n=1}^X n^{-\sigma} \ll \max\p{\frac{X^{1-\sigma}-1}{1-\sigma},\log X} \ll  \frac{\abs{\log H}} { \omega(\abs{\log H})H}.
\end{gather}
Our result for $0<H \leq H_0$ thus follows from the approximate functional equation \eqref{approx}, and  the inequalities \eqref{ajaj}, \eqref{ajajaj} and \eqref{ajajajaj}. The result for $0<H_0<H \leq 1$ is a trivial consequence of the result for $H=H_0$. \qed

\subsection{Proof of Theorem 1}
For the case $0<H<1$ Theorem 1 follows  by choosing
\begin{gather*}
 \omega(x)=\begin{cases} 1, & 0 \leq x \leq 1, \\ \p{1+\log x}^{-1-\varepsilon}, & x>1, \end{cases} \\ \intertext{in Theorem 4 and by the fact that}
 \abs{\log H}^{-1-\varepsilon} \ll H^{\varepsilon}, \qquad (H<1/2). 
\end{gather*}
The case $H \geq 1$ follows from Theorem 3. \qed
\section{An Integral kernel of Ramachandra}

\subsection{An optimal kernel}

We will use the same test-function as Ramachandra \cite[p. 35]{Ramachandra}, although we will treat it somewhat differently. Ramachandra used the test-function $\exp(\sin^2 w)$. He then proved some results on the Fourier transform of this function. Ivi{\'c} (\cite{Ivic} and \cite[pp. 21-22]{Ivic2})  considered the function $\exp(- \cos w)$ instead, which by the trigonometric identity $\cos(2x)=1-2\sin^2(x)$ is essentially equivalent. This test-function of Ramachandra is in fact optimal in a certain sense.  We quote from Ivi{\'c} \cite[p. 22]{Ivic}:

``In part I of \cite{Ramachandra6} Ramachandra expresses the opinion that probably no function regular in a strip exists, which decays faster than a second order exponential. This is indeed so, as was kindly pointed out to me by W. K. Hayman in a letter of August 1990. Thus Ramachandras kernel function $\exp(\sin^2 w)$ (or $\exp(-\cos w)$) is essentially the best possible.''

\subsection{$K$-Bessel functions}
 Instead of treating this function directly, we  relate this kernel to the theory of the Macdonald, or $K$-Bessel function $K_\nu(z)$ introduced by 
Basset \cite{Basset} for integer values of $\nu$ and generalized to noninteger values of $\nu$ by Macdonald \cite{Macdonald}. Sch\"afli \cite{Schafli} proved\footnote{Since Sch\"afli's results predates the introduction of the $K$-Bessel-function he used a different notation in his paper.} that
\begin{gather} \label{ttt}
  K_{\nu}(x)=\halv e^{-\halv \nu \pi i} \int_{-\infty}^\infty e^{-ix \sinh t -\nu t}dt.
\end{gather}
For this result see Watson \cite[6.22, Eq. (10)]{Watson}. The explicit relationship between this integral and Ramachandra's kernel function will be given by Theorem 5. By noticing the connection with the Bessel functions,   the required results for Ramachandra's  kernel-function needed to develop the Balasubramanian-Ramachandra method are simple consequences of  well-known results from the theory of Bessel function.

\subsection{A summation formula and $K$-Bessel functions}

\begin{thm}
 Let \begin{gather*} A(s)=\sum_{n=1}^\infty a_n n^{-s} \\ \intertext{be a Dirichlet series absolutely convergent for $\Re(s)>\sigma$. Then for $x,\lambda>0$ and $\Re(s)>\sigma$. we have that}
 \sum_{n=1}^\infty a_n K_{i \lambda \log n}(x) n^{-s} =  \frac {1} {2 \lambda} \int_{-\infty}^{\infty } A(s+it) e^{-x \cosh (t/\lambda)} dt.
\end{gather*}
\end{thm}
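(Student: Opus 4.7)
The plan is to use an integral representation of $K_\nu$ in which the oscillatory factor $e^{-ix\sinh t}$ appearing in \eqref{ttt} is replaced by the absolutely integrable $e^{-x\cosh t}$, and then swap sum and integral. A classical alternative to Schläfli's formula (Watson \cite[\S 6.22]{Watson}) reads
$$K_\nu(x)=\frac{1}{2}\int_{-\infty}^\infty e^{-x\cosh t-\nu t}\,dt,\qquad x>0,\;\nu\in\C,$$
and follows from \eqref{ttt} by shifting the $t$-contour from $\Im t=0$ to $\Im t=-\pi/2$: the identity $\sinh(t-i\pi/2)=-i\cosh t$ turns $-ix\sinh t$ into $-x\cosh t$, and the factor $e^{\nu i\pi/2}$ picked up from $-\nu t$ cancels the prefactor $e^{-\nu i\pi/2}$ in \eqref{ttt}.

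Setting $\nu=i\lambda\log n$ in this cosh-form gives $e^{-\nu t}=n^{-i\lambda t}$, and the linear substitution $u=\lambda t$ produces the Fourier-type identity
$$K_{i\lambda\log n}(x)=\frac{1}{2\lambda}\int_{-\infty}^\infty e^{-x\cosh(u/\lambda)}\,n^{-iu}\,du.$$

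Multiplying by $a_n n^{-s}$, summing over $n$, and recognising $\sum_n a_n n^{-(s+iu)}=A(s+iu)$ then yields the claimed identity, provided the exchange of sum and integral is justified. This is immediate from Fubini: the hypothesis $\Re(s)>\sigma$ gives $\sum_n |a_n| n^{-\Re s}<\infty$, while $e^{-x\cosh(u/\lambda)}$ is of double-exponential (Schwartz) decay in $u$ for any fixed $x>0$ and $\lambda>0$, so the iterated absolute integral $\sum_n |a_n| n^{-\Re s}\int_{-\infty}^\infty e^{-x\cosh(u/\lambda)}\,du$ is finite. The only non-mechanical step is the passage from \eqref{ttt} to the cosh-form, for which the contour shift above (or a direct reference to Watson) suffices; beyond that, the argument is the standard pattern of converting a Dirichlet series into an integral of its translates against a rapidly decaying kernel, and I do not anticipate a genuine obstacle.
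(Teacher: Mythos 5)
Your proposal is correct and follows essentially the same route as the paper: both derive the real-exponential form $K_\nu(x)=\frac12\int_{-\infty}^\infty e^{-x\cosh t-\nu t}\,dt$ from Schl\"afli's identity \eqref{ttt} by a contour shift of $\pi/2$ in the imaginary direction, specialize to purely imaginary order, rescale by $\lambda$, and interchange sum and integral. Your explicit Fubini justification (absolute convergence of the Dirichlet series times the integrable kernel) is slightly more careful than the paper's terse ``interchanging the summation and integration,'' but it is the same argument.
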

\begin{proof}

  By using $\nu=i \mu$ and moving the first factor inside the integral, Sch\"afli's identity Eq.  \eqref{ttt} can be written as
 \begin{gather} \notag
     K_{i \mu}(x) =  \frac 1 2   \int_{-\infty}^{\infty} e^{-ix \sinh t- \mu i (t+\pi i /2)}dt. \\ 
 \intertext{With the substitution $\tau=t+\pi i/2$ this integral equals} \notag
    \frac 1 2   \int_{\pi i/2-\infty}^{\pi i/2+\infty}  e^{-ix \sinh (\tau-\pi/2 i)- \mu i \tau} d\tau. \\ \intertext {By moving the integration line from $\Im(\tau)=\pi/2$  to $\Im(\tau)=0$ and using the identity $\cosh \tau= \sinh (\tau-\pi/2 i) i$ we find that}
     K_{i \mu}(x) = \frac{1}{2}  \int_{-\infty}^{\infty}  e^{-x \cosh \tau- \mu i  \tau} d\tau. \label{Kdef}
\end{gather}
Applying this term-wise and interchanging the summation and integration gives us the identity
\begin{align*}
 \sum_{n=1}^\infty a_n K_{i \lambda \log n}(x) n^{-s} 
&=\sum_{n=1}^\infty a_n n^{-s} \frac 1 2  \int_{-\infty}^{\infty}  e^{-x \cosh \tau- \lambda \log n \tau} d\tau, \\ 
&= \frac{1}{2}  \int_{-\infty}^{\infty}  e^{-x \cosh \tau} A(s+\lambda \tau i) d\tau. 
\end{align*}
By the substitution $t=\lambda \tau$ this equals
\begin{gather*}
\frac{1}{2 \lambda} \int_{-\infty}^{\infty}  e^{-x \cosh (t/\lambda)} A(s+it) dt. 
\end{gather*}
.\end{proof}

\subsection{Asymptotic estimates for $K$-Bessel functions}
It will be sufficient for us to use Theorem 5 for some fixed $x>0$. For convenience we will state the following lemma for $x=2$ although a similar result can be proved for arbitrary $x$ as well:

\begin{lem}
   We have for $t>0$ that
    \begin{gather*} 
       K_{it}(2)=  e^{-\pi t/2} \sqrt{ \frac {2 \pi} {t}}  \sin \p{\frac \pi 2 \p{ t \log t+t}}   \p{1+\Oh{\frac 1 t}}. 
    \end{gather*}
 \end{lem}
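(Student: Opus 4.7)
The plan is to apply the saddle-point method to the integral representation
\begin{equation*}
K_{it}(2) = \frac{1}{2}\int_{-\infty}^\infty e^{-2\cosh\tau - it\tau}\,d\tau
\end{equation*}
established in \eqref{Kdef}. First I would locate the saddles of the phase $f(\tau) = -2\cosh\tau - it\tau$. Setting $f'(\tau) = 0$ gives $\sinh\tau = -it/2$, which for $t > 2$ has two solutions in the strip $|\Im\tau| < \pi$, namely $\tau_\pm = \pm a - i\pi/2$ with $a = \operatorname{arccosh}(t/2) = \log t + O(1/t^2)$. At these saddles one has $\cosh(\pm a - i\pi/2) = \mp i\sqrt{t^2/4 - 1}$, so
\begin{equation*}
f(\tau_\pm) = -\tfrac{\pi t}{2} \mp i\psi(t), \qquad \psi(t) = ta - 2\sqrt{t^2/4 - 1} = t\log t - t + O(1/t),
\end{equation*}
and $f''(\tau_\pm) = \mp 2i\sqrt{t^2/4 - 1}$, of modulus $\sim t$.

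Next, I would deform the real axis into a contour passing through $\tau_-$ and then $\tau_+$, at each saddle along the direction of steepest descent (at angles $\pm\pi/4$ to the horizontal, determined by the argument of $f''(\tau_\pm)$). The standard steepest-descent lemma then produces a contribution $\sqrt{2\pi/|f''(\tau_\pm)|}\,e^{f(\tau_\pm)}e^{\pm i\pi/4}$ from each saddle. Since $K_{it}(2)$ is real (as one sees by symmetrising \eqref{Kdef} to the cosine transform $K_{it}(2) = \int_0^\infty e^{-2\cosh u}\cos(tu)\,du$), the two saddle contributions must be complex conjugates of one another; summing and halving them produces the leading term of the form $\sqrt{2\pi/t}\,e^{-\pi t/2}$ times a sinusoidal factor whose phase is controlled by $\psi(t)$, matching the claimed formula. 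The $1 + O(1/t)$ error term then arises by pushing the saddle expansion to one further order.

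The main obstacle is to make the contour deformation rigorous: the two saddles lie on the line $\Im\tau = -\pi/2$, where $\Re\cosh\tau = 0$, so the integrand merely oscillates and does not decay there. I would therefore deform only within the strip $-\pi/2 < \Im\tau < 0$ (where $\cos v > 0$ makes $|e^{-2\cosh\tau}|$ decay as $|\Re\tau| \to \infty$), letting the contour descend to touch $\Im\tau = -\pi/2$ only in small neighbourhoods of the two saddles, and estimating the connecting arcs and the tails separately. A cleaner alternative would be to bypass the contour argument entirely via the identity $K_{it}(2) = \frac{\pi}{2i\sinh\pi t}(I_{-it}(2) - I_{it}(2))$: the leading term of $I_{\pm it}(2)$ at argument $2$ is $1/\Gamma(1 \pm it)$, whose modulus and phase are pinned down by Stirling's formula for imaginary argument, and it then suffices to show that the remaining Taylor terms of $I_{\pm it}(2)$ contribute $O(1/t)$ relative to the leading one.
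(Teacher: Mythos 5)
Your primary route --- steepest descent applied to the representation $K_{it}(2)=\frac12\int_{-\infty}^{\infty}e^{-2\cosh\tau-it\tau}\,d\tau$ from \eqref{Kdef}, with the two saddles at $\pm\operatorname{arccosh}(t/2)-i\pi/2$ --- is a genuinely different argument from the one in the paper. The paper does exactly what you sketch as your ``cleaner alternative'' at the end: it truncates the series $I_{\nu}(x)=\sum_{m\ge 0}(x/2)^{\nu+2m}/(m!\,\Gamma(\nu+m+1))$ at $m=0$ (the higher terms losing a factor of order $1/|\nu|$ through the extra Gamma factors), inserts this into $K_\nu=\frac{\pi}{2}(I_{-\nu}-I_\nu)/\sin\nu\pi$, simplifies with the reflection formula to $K_\nu(x)=\frac{1}{2\nu}\bigl((x/2)^{-\nu}\Gamma(1+\nu)-(x/2)^{\nu}\Gamma(1-\nu)\bigr)\bigl(1+O_x(1/\nu)\bigr)$, and finishes with Stirling at $\nu=it$. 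The series route buys brevity and sidesteps the one real technical issue you correctly identify, namely that the saddles lie on $\Im\tau=-\pi/2$ where $|e^{-2\cosh\tau}|$ no longer decays, so the contour must stay in $-\pi/2<\Im\tau<0$ except near the saddles; your proposed fix is the standard one and would work, and your route has the advantage of generalizing to uniform asymptotics in $x$ and of starting from the same integral the paper itself exploits in Theorem 5. One caution: carried to the end, both arguments produce the phase $t\log t-t+\pi/4$ (your $\psi(t)+\pi/4$, equivalently $\arg\Gamma(1+it)$ from Stirling), not $\frac{\pi}{2}(t\log t+t)$ as printed in the Lemma, so your parenthetical ``matching the claimed formula'' should be rechecked --- the discrepancy appears to be an error in the Lemma's statement rather than in either proof. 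A minor slip: $f''(\tau)=-2\cosh\tau$ gives $f''(\tau_\pm)=\pm 2i\sqrt{t^2/4-1}$, opposite in sign to what you wrote, but this only swaps the two steepest-descent directions and is absorbed by your conjugate-symmetry argument.
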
 
\begin{proof}\footnote{This result is most likely well-known and in such a case I should find a reference. I could not find the result in Watson \cite{Watson}, and since its proof is simple it might as well remain even if I find a reference.}
Similarly to the asymptotic expansion of $J_\nu(z)$ made by Watson \cite[Section 8.1]{Watson},  it follows from the definition of the $K_\nu(z)$-Bessel function (Watson \cite[Section 3.17 $(6)$ and $(7)$]{Watson}.
\begin{gather*}
  K_\nu(z)=\frac {\pi} 2 \frac{I_{-\nu}(z)-I_\nu(z)}{\sin \nu \pi}, \\ \intertext{and}
  I_\nu(z)= \sum_{m=0}^\infty \frac{(z/2)^{\nu+2m}}{m! \Gamma(\nu+m+1)},
\end{gather*}
that
\begin{gather*} 
  K_\nu(x)= \frac {\pi} {2\sin \nu \pi} \p{\frac {(x/2)^{-\nu}} {\Gamma(1-\nu)}-\frac {(x/2)^{\nu}} {\Gamma(1+\nu)}} \p{1+O_x \p{\frac 1 \nu}}. \\ \intertext{By the reflection formula for the Gamma-function}
  \Gamma(z)\Gamma(1-z) =\frac{\pi}{\sin \pi z}, \\ \intertext{and the fact that $\Gamma(z+1)=z \Gamma(z)$, this  simplifes to}
  K_\nu(x)= \frac{1} {2\nu}  \p{(x/2)^{-\nu}\Gamma(1+\nu)- (x/2)^\nu \Gamma(1-\nu)} \p{1+O_x \p{\frac 1 \nu}}.
\end{gather*}
The final result follows from Stirling's formula
\begin{gather*}
  \Gamma(z)= \sqrt{2\pi/\nu} \p{z/e}^z \p{1+\Oh{1/z}},
\end{gather*}
and letting $\nu=it$.
\end{proof}

\section{The non multiplicative case}

\subsection{Some lemmas}
\begin{lem}
  Suppose $\lambda,T>0$. Then
 \begin{gather*}
    \int_{|t| \geq T} \exp \p{-2\cosh \frac t \lambda}  dt \leq  \frac 1 \lambda \exp \p{-\exp\p{\frac{T} {\lambda}}}.
   \end{gather*}
\end{lem}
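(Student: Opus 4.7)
The approach is to reduce the symmetric two-sided tail to a one-dimensional exponential-integral tail by a change of variables, and then estimate the tail by the ``weight times the decay at the lower endpoint'' principle. By symmetry of $\cosh$, the integral equals $2\int_T^\infty \exp(-2\cosh(t/\lambda))\,dt$. The substitution $u=e^{t/\lambda}$ is exactly tailored to this integrand, since $2\cosh(t/\lambda)=u+u^{-1}$ and $dt=\lambda\,du/u$; the lower endpoint becomes $u=e^{T/\lambda}$.

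After this substitution the integral becomes
$$2\lambda\int_{e^{T/\lambda}}^{\infty}\frac{e^{-u-u^{-1}}}{u}\,du\le 2\lambda\int_{e^{T/\lambda}}^{\infty}\frac{e^{-u}}{u}\,du,$$
where I dropped the harmless factor $e^{-u^{-1}}\le 1$. On the region of integration the weight satisfies $1/u\le e^{-T/\lambda}$, so pulling this out of the integral and using $\int_{a}^{\infty}e^{-u}\,du=e^{-a}$ with $a=e^{T/\lambda}$ yields the clean estimate
$$\int_{|t|\ge T}\exp\bigl(-2\cosh(t/\lambda)\bigr)\,dt\;\le\;\frac{2\lambda}{e^{T/\lambda}}\,\exp\bigl(-\exp(T/\lambda)\bigr).$$

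The final step is simply to replace the prefactor $2\lambda/e^{T/\lambda}$ by $1/\lambda$, which amounts to the elementary inequality $2\lambda^2\le e^{T/\lambda}$; this will be satisfied in the parameter regime in which the lemma is later invoked (where $T/\lambda$ is comfortably large compared to $\log\lambda$). The only real obstacle is bookkeeping of constants in this last comparison; the substitution itself and the Mills-ratio-style tail bound on $\int_a^\infty e^{-u}/u\,du$ are entirely routine, and no Bessel-function machinery from the previous section is needed here.
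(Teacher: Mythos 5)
Your substitution $u=e^{t/\lambda}$ gives a genuinely different, and more self-contained, route than the paper's. The paper rescales by $\tau=t/\lambda$ to reduce to the case $\lambda=1$ and then disposes of that case by declaring it trivial for large $T$ and checking it numerically in Maple for small $T$; your chain
\begin{equation*}
\int_{|t|\ge T}\exp\bigl(-2\cosh(t/\lambda)\bigr)\,dt
=2\lambda\int_{e^{T/\lambda}}^{\infty}\frac{e^{-u-1/u}}{u}\,du
\le \frac{2\lambda}{e^{T/\lambda}}\,\exp\bigl(-\exp(T/\lambda)\bigr)
\end{equation*}
is fully analytic and needs no numerical input. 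In the regime where the lemma is actually invoked (Lemmas 4 and 6, where $\lambda=2(1-\sigma)/\pi<2/\pi$), one has $2\lambda^2\le 8/\pi^2<1\le e^{T/\lambda}$, so your closing comparison $2\lambda^2\le e^{T/\lambda}$ is automatic and your proof is complete there — arguably an improvement on the paper's, since it also produces the extra decaying factor $e^{-T/\lambda}$.

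The caveat you wave at in your last paragraph is, however, a real gap relative to the lemma \emph{as stated}, which asserts the bound for all $\lambda,T>0$: for $\lambda>1/\sqrt2$ and $T$ small your prefactor $2\lambda e^{-T/\lambda}$ exceeds $1/\lambda$, and the loss is not recoverable by sharpening your tail estimate alone (already at $\lambda=1$, $T=0.5$ your upper bound is about $0.233$ against a target of $e^{-e^{0.5}}\approx 0.192$, even though the true integral is about $0.108$). You should be aware, though, that this cannot be fixed by anyone: the statement is false for large $\lambda$ and small $T$, since as $T\to0^{+}$ the left-hand side tends to $2\lambda K_0(2)$, which grows linearly in $\lambda$, while the right-hand side tends to $e^{-1}/\lambda$. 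The paper's own reduction carries the same hidden restriction, because the substitution $\tau=t/\lambda$ produces a prefactor $\lambda$, not $1/\lambda$, so it only yields the claim when $\lambda\le1$. The honest fix is to state the restriction explicitly — $\lambda\le1/\sqrt2$ suffices for your argument verbatim, and covers every application in the paper — rather than deferring it to ``the parameter regime in which the lemma is later invoked.''
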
 

\begin{proof} 
  By the substitution $\tau=t/\lambda$ it is sufficient to prove the Lemma in case $\lambda=1$, i.e. 
\begin{gather}
    \int_{|\tau| \geq T} \exp \p{-2\cosh \tau}  d\tau \leq    \exp \p{-\exp\p{T}}. 
\end{gather}

This result  is trival for large $T$ and follow from numerical estimation in Maple\footnote{This should  possibly be done in a more rigid manner (without Maple).} for small $T$ (infact replacing the factor 1 in the RHS by $2e K_0(2)=0.619$. gives the optimal bound).
\end{proof}

\begin{lem}
  Suppose $f(t)$ is some function such that $ \abs{f(t)} \leq C$ for all real $t$ and that for some $\sigma<1$ and $\varepsilon>0$ we have the inequality
\begin{gather*}
   \int_{-\infty}^\infty \abs{f\p{t-\frac H 2}} \exp \p{-2 \cosh\p{\frac{\pi t} {2(1-\sigma)}}} dt \geq \varepsilon.
 \\ \intertext{Then} 
   \int_{0}^H \abs{f(t)} dt \geq \frac{\varepsilon} 2, \qquad  \text{for} \qquad 
H =\frac{4(1-\sigma)} {\pi} \log \log \p{\frac{C(1-\sigma)}{\varepsilon}}.
\end{gather*}
 \end{lem}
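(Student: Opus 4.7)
The plan is to compare the weighted integral in the hypothesis with $\int_0^H|f|$ by splitting the kernel $K(t):=\exp(-2\cosh(\pi t/(2(1-\sigma))))$ into a main region, where it is bounded by its maximum value $K(0)=e^{-2}$, and a tail, where it decays doubly-exponentially and can be controlled by Lemma 2. Since $K$ is even and attains its maximum at $t=0$, a translation $u=t+H/2$ (the sign of the shift being harmless because $K$ is even) recasts the hypothesis as
$$\int_{-\infty}^\infty |f(u)|\, K(u-H/2)\, du \ge \varepsilon,$$
so that the peak of the weight $K(u-H/2)$ now sits at the midpoint $u=H/2$ of the target interval $[0,H]$.

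I would then split at $|u-H/2|=H/2$. On the main region $u\in[0,H]$, I bound the weight by $K(u-H/2)\le K(0)=e^{-2}$, yielding a contribution of at most $e^{-2}\int_0^H|f(u)|\,du$. On the tail $|u-H/2|>H/2$, I use $|f|\le C$ together with Lemma 2 (applied with $\lambda=2(1-\sigma)/\pi$ and cutoff $T=H/2$, and corrected to the factor $\lambda$ that the substitution $\tau=t/\lambda$ actually produces) to bound the tail by
$$C\cdot\frac{2(1-\sigma)}{\pi}\,\exp\!\Bigl(-\exp\!\Bigl(\frac{\pi H}{4(1-\sigma)}\Bigr)\Bigr).$$
The specific choice $H=\frac{4(1-\sigma)}{\pi}\log\log(C(1-\sigma)/\varepsilon)$ is calibrated precisely so that $\exp(\pi H/(4(1-\sigma)))=\log(C(1-\sigma)/\varepsilon)$, whence the tail collapses to $C\cdot\frac{2(1-\sigma)}{\pi}\cdot\frac{\varepsilon}{C(1-\sigma)}=\frac{2\varepsilon}{\pi}$. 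Combining yields $\varepsilon\le e^{-2}\int_0^H|f|+\frac{2\varepsilon}{\pi}$, and hence $\int_0^H|f|\ge e^2(1-2/\pi)\varepsilon$, which comfortably exceeds $\varepsilon/2$.

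The delicate point is the numerical balance between the main and tail contributions: since $2/\pi>1/2$, simply bounding $K$ by $1$ on the main region would leave only $(1-2/\pi)\varepsilon\approx0.36\,\varepsilon$, which would be insufficient for the target $\varepsilon/2$. The factor $e^{-2}$ gained from using the actual maximum of $K$ is what provides the needed slack. Apart from this calibration, every step — the change of variables exploiting the evenness of $K$, the invocation of Lemma 2, and the verification that the prescribed $H$ is exactly the threshold making the doubly-exponential tail of size $\varepsilon/C$ — is routine.
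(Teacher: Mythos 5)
Your proposal is correct and is essentially the fleshed-out version of the paper's one-line proof (``This follows from Lemma 3''): the tail of the kernel is controlled by the doubly-exponential bound of Lemma~3 (your ``Lemma~2''), the choice of $H$ is calibrated so that this tail is $O(\varepsilon)$, and the main interval is handled by bounding the kernel by its maximum $e^{-2}$. You also correctly observe that the substitution $\tau=t/\lambda$ in Lemma~3 produces a factor $\lambda$ rather than the stated $1/\lambda$ --- a correction that is in fact needed here, since the $(1-\sigma)$ inside the $\log\log$ defining $H$ is precisely what cancels this $\lambda=2(1-\sigma)/\pi$ to leave the tail at $2\varepsilon/\pi$.
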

\begin{proof} This follows from Lemma 3. \end{proof}
   
We will now use our result from \cite{Andersson6}. 
\begin{lem}
  Assume that $0 <\alpha \leq 1$, and that $|a_n| \leq M$. Then  we have for $0<\delta \leq 0.05$ that
  \begin{gather*}
    \inf_{\sigma>1,T} \int_T^{T+\delta} \abs{\alpha^{-\sigma-it}+\sum_{n=1}^\infty a_n (n+\alpha)^{-\sigma-it}} dt \geq \alpha^{-1} \p{1+\frac {M^2 \alpha} \delta}^{-\frac{7}{6\delta}} 10^{-\frac 9 \delta}.  
  \end{gather*}
 \end{lem}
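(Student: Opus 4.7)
The plan is to deduce this lemma from the quantitative vanishing estimate of \cite{Andersson6}, via a simple normalization that pulls the leading term $\alpha^{-s}$ out in front. The right-hand side has the characteristic shape $(1+M^2\alpha/\delta)^{-7/(6\delta)} 10^{-9/\delta}$ of estimates in \cite{Andersson6}, and the factor $\alpha^{-1}$ in front already hints at the normalization that is needed, so the work is mainly in verifying that the parameters match.

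First I would factor out $\alpha^{-\sigma-it}$ from the sum, writing
\begin{gather*}
\alpha^{-\sigma-it}+\sum_{n=1}^\infty a_n (n+\alpha)^{-\sigma-it}=\alpha^{-\sigma-it}\p{1+\sum_{n=1}^\infty a_n\p{\frac{n+\alpha}{\alpha}}^{-\sigma-it}}.
\end{gather*}
Since $\sigma>1$ and $0<\alpha\leq 1$ we have $|\alpha^{-\sigma-it}|=\alpha^{-\sigma}\geq\alpha^{-1}$, so
\begin{gather*}
\int_T^{T+\delta}\abs{\alpha^{-\sigma-it}+\sum_{n=1}^\infty a_n (n+\alpha)^{-\sigma-it}}\,dt\geq \alpha^{-1}\int_T^{T+\delta}\abs{1+\sum_{n=1}^\infty a_n\mu_n^{-\sigma-it}}\,dt,
\end{gather*}
where $\mu_n=(n+\alpha)/\alpha=1+n/\alpha$. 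The new frequencies $\log\mu_n$ are strictly positive and satisfy $\log\mu_n\geq\log(1+1/\alpha)\geq\log 2$, so the inner integral fits the template covered by \cite{Andersson6}.

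Next I would invoke the estimate from \cite{Andersson6} for this normalized sum. That theorem supplies, for Dirichlet-type series $1+\sum a_n\mu_n^{-s}$ with $|a_n|\leq M$ and positive frequencies $\log\mu_n$, a uniform lower bound of the form $\bigl(1+C_a/\delta\bigr)^{-7/(6\delta)}\,10^{-9/\delta}$, where $C_a$ is the appropriate $\ell^2$-mass of the coefficient sequence. In our setting this mass is controlled by $M^2\sum_{n\geq 1}\mu_n^{-2\sigma}\leq M^2\alpha^{2\sigma}\zeta(2\sigma,1+\alpha)\ll M^2\alpha$ as $\sigma\to 1^+$, which accounts for the specific factor $M^2\alpha$ (rather than $M^2$) appearing in the statement. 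Combined with the $\alpha^{-1}$ already factored out, this produces the claimed bound $\alpha^{-1}(1+M^2\alpha/\delta)^{-7/(6\delta)}10^{-9/\delta}$ for $0<\delta\leq 0.05$.

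The main obstacle is bookkeeping rather than any new idea: one has to check that the hypotheses of the \cite{Andersson6} estimate are met uniformly in $\sigma>1$ and $T$, that the positivity of the leading coefficient (here normalized to $1$) is properly exploited, and that the $\alpha$-dependence of the frequencies $\mu_n=1+n/\alpha$ produces precisely the factor $M^2\alpha$ in the exponent base rather than some weaker dependence on $\alpha$. Once these bookkeeping steps are handled, nothing beyond the previously established estimate is required.
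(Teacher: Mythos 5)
Your opening move --- factoring out $\alpha^{-\sigma-it}$ and using $\alpha^{-\sigma}\geq\alpha^{-1}$ for $\sigma>1$, $0<\alpha\leq 1$ --- is sound, and you have correctly located the source of the bound in \cite{Andersson6}. But the argument then rests on a result that is not actually established: you invoke a theorem of \cite{Andersson6} for general Dirichlet-type series $1+\sum a_n\mu_n^{-s}$ with arbitrary positive frequencies, whose lower bound is asserted to take the form $(1+C_a/\delta)^{-7/(6\delta)}10^{-9/\delta}$ with $C_a$ an $\ell^2$-mass of the coefficients. The paper does not use (and \cite{Andersson6} need not contain) such a general-frequency statement; its actual proof is a one-line citation of Lemma 15 of \cite{Andersson6}, which is precisely the displayed inequality of the present lemma with $M=1$ --- that is, it is already formulated for $\alpha^{-s}+\sum a_n(n+\alpha)^{-s}$, with the $\alpha^{-1}$ prefactor and the factor $\alpha/\delta$ built in --- together with the observation that the same proof goes through for arbitrary $M>0$. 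So no normalization is needed, and your reduction replaces a concrete citation by a hypothesized one whose existence and exact constants you cannot verify.

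Even granting a general-frequency theorem of the shape you describe, your computation $M^2\sum_n\mu_n^{-2\sigma}\leq M^2\alpha^{2\sigma}\zeta(2\sigma,1+\alpha)\ll M^2\alpha$ only identifies the relevant parameter up to an unspecified constant. Since that parameter sits in the base of $(1+C_a/\delta)^{-7/(6\delta)}$, a constant factor $C>1$ costs roughly $C^{-7/(6\delta)}$, which is exponentially large in $1/\delta$ and is not absorbed by the stated bound; the lemma has exact constants, so ``$\ll$'' is not good enough here. To repair this step you would need the clean inequality $\sum_{n\geq1}\p{\alpha/(n+\alpha)}^{2\sigma}\leq\alpha$ for $\sigma>1$ (which does hold, e.g. via $\alpha(n+\alpha)^{-2}\leq n^{-1}-(n+\alpha)^{-1}$), and, more importantly, you would need to verify that the quoted general theorem exists in \cite{Andersson6} with exactly these constants. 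As it stands, the middle step is a plausible guess at the content of the reference rather than a proof.
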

\begin{proof}  Lemma 15 in \cite{Andersson6} is in fact stated for $M=1$, but the same proof holds for any $M>0$. \end{proof}

We will now state a Lemma that by the fact that the Hurwitz-zeta function can be approximated by a Dirichlet polynomial yields Theorem 3. 
\begin{lem}
 Let $A(s,\alpha)$ be a Dirichlet polynomial such that 
    \begin{gather*}
     A(s,\alpha)= \alpha^{-s} +\sum_{n=1}^N a_n (n+\alpha)^{-s}
  \end{gather*}
   and $\abs{a_n} \leq M$. 
Then we have for $0<\sigma<1$ that
\begin{multline*}
     \inf_T  
     \int_{-\infty}^{\infty} \exp \p{-2\cosh \frac{\pi t} {2(1-\sigma)}} \int_{t}^{t+\delta} \abs{A(\sigma+i\tau+iT,\alpha)} d\tau dt \geq \\ \geq \frac \pi {9 \alpha (1-\sigma)}  \p{1+ \frac {194 \alpha M^2} {\delta} }^{-\frac{7}{6\delta}} 10^{-\frac 9 \delta}.  
  \end{multline*}
\end{lem}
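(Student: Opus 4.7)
The plan is to reduce Lemma 6 to Lemma 5 using the Bessel-function identity of Theorem 5 as a bridge, with the parameter choice $\lambda=2(1-\sigma)/\pi$ and $x=2$ so that the weight on the right-hand side of Theorem 5 coincides with the Ramachandra kernel appearing in Lemma 6. Generalizing Theorem 5 to a Dirichlet series with exponents $\log\alpha,\log(1+\alpha),\ldots,\log(N+\alpha)$ (the derivation via Schl\"afli's integral goes through unchanged for such ``generalized'' Dirichlet series), I obtain the identity
\[
L(T'):=K_{i\lambda\log\alpha}(2)\alpha^{-\sigma-iT'}+\sum_{n=1}^{N}a_{n}K_{i\lambda\log(n+\alpha)}(2)(n+\alpha)^{-\sigma-iT'}=\frac{1}{2\lambda}\int_{-\infty}^{\infty} A(\sigma+iT'+it,\alpha)\,e^{-2\cosh(t/\lambda)}\,dt.
\]

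Next, taking absolute values in this identity, integrating in $T'\in[T,T+\delta]$, and applying Fubini together with the change of variables $\tau=t+(T'-T)$, the resulting double integral on the right coincides with $I(T)/(2\lambda)$, where $I(T)$ denotes the double integral of Lemma 6; this yields the key inequality $I(T)\geq 2\lambda\int_{T}^{T+\delta}|L(T')|\,dT'$. To lower-bound $\int|L|$ by Lemma 5, I factor $L$ as
\[
L(T')=K_{i\lambda\log\alpha}(2)\alpha^{1-\sigma}\Bigl[\alpha^{-1-iT'}+\sum_{n=1}^{N}a_{n}^{*}(n+\alpha)^{-1-iT'}\Bigr],\qquad a_{n}^{*}=\frac{a_{n}K_{i\lambda\log(n+\alpha)}(2)(n+\alpha)^{1-\sigma}}{K_{i\lambda\log\alpha}(2)\alpha^{1-\sigma}},
\]
matching the form of Lemma 5 at $\sigma^{*}=1$. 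The factor $(n+\alpha)^{1-\sigma}K_{i\lambda\log(n+\alpha)}(2)$ is uniformly bounded: for large $\lambda\log(n+\alpha)$ one uses Lemma 2 and the fact that $\pi\lambda/2=1-\sigma$ cancels the $(n+\alpha)^{1-\sigma}$ factor, while for small $\lambda\log(n+\alpha)$ one uses the pointwise bound $|K_{it}(2)|\leq K_{0}(2)$. This yields $|a_{n}^{*}|^{2}\leq 194 M^{2}$ for an appropriate absorbed constant; since we are dealing with a finite Dirichlet polynomial, Lemma 5 extends to $\sigma^{*}=1$ by continuity, giving
\[
\int_{T}^{T+\delta}\bigl|\alpha^{-1-iT'}+\textstyle\sum_{n}a_{n}^{*}(n+\alpha)^{-1-iT'}\bigr|\,dT'\geq\alpha^{-1}\Bigl(1+\tfrac{194\alpha M^{2}}{\delta}\Bigr)^{-7/(6\delta)}10^{-9/\delta}.
\]

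Combining everything gives $I(T)\geq 2\lambda|K_{i\lambda\log\alpha}(2)|\,\alpha^{-\sigma}\cdot(\text{Lemma 5 factor})$. It remains to bound $2\lambda|K_{i\lambda\log\alpha}(2)|\,\alpha^{-\sigma}$ from below by $\pi/(9\alpha(1-\sigma))$. Using Lemma 2 with $\pi\lambda/2=1-\sigma$, the magnitude of $K_{i\lambda\log\alpha}(2)$ picks up a factor $\alpha^{1-\sigma}$ which combines with the $\alpha^{-\sigma}$ to give the $\alpha^{-1}$ in the target, and the $\sqrt{2\pi/(\lambda|\log\alpha|)}$ together with the overall $2\lambda$ assembles the growth $\pi/(9(1-\sigma))$.

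The main obstacle is this last lower bound on $|K_{i\lambda\log\alpha}(2)|$: Lemma 2's asymptotic carries an oscillating factor $\sin(\pi(t\log t+t)/2)$ with infinitely many zeros, so a uniform pointwise lower bound fails for arbitrary $\alpha\in(0,1]$. Resolving this requires either a two-regime argument (small $|\lambda\log\alpha|$, where $K_{i\lambda\log\alpha}(2)$ is uniformly close to the strictly positive $K_{0}(2)$, versus large $|\lambda\log\alpha|$, where one must avoid the sine's zeros) or an averaging trick that smooths over the oscillation. The preceding steps---the generalization of Theorem 5, the Fubini manipulation, and the reduction to Lemma 5---are comparatively routine.
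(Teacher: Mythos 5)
Your overall architecture---generalize Theorem 5 to the frequencies $\log(n+\alpha)$, choose $\lambda=2(1-\sigma)/\pi$ so the Bessel kernel matches the weight in the statement, run Fubini over $T'\in[T,T+\delta]$, and finish with Lemma 5 together with the bound $\sup_{t\geq 0}|K_{it}(2)e^{\pi t/2}|/K_0(2)=13.917$, $13.917^2<194$---is essentially the paper's. But the obstacle you flag at the end is a genuine gap, not a loose end, and as you have set things up it cannot be closed. Because you convolve $A(\sigma+iT'+it,\alpha)$ directly, the leading coefficient of the convolved polynomial is $K_{i\lambda\log\alpha}(2)$. By Lemma 2 this oscillates like $\sin(\tfrac{\pi}{2}(t\log t+t))$ and has zeros, so for suitable pairs $(\sigma,\alpha)$ it vanishes exactly; then your factorization of $L(T')$ is impossible (you divide by this quantity to define the $a_n^{*}$), Lemma 5 has no leading term of fixed modulus to latch onto, and neither a two-regime split nor averaging can produce a lower bound of the required uniform shape $\pi/(9\alpha(1-\sigma))$.

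The paper avoids this entirely with one normalization you are missing: it convolves $\alpha^{s+it}A(s+it,\alpha)$ rather than $A(s+it,\alpha)$. The leading term of $\alpha^{s}A(s,\alpha)=1+\sum_{n=1}^{N}a_n((n+\alpha)/\alpha)^{-s}$ then has frequency $\log 1=0$, so after convolution its coefficient is $K_0(2)\geq 1/9$, a fixed positive constant---this is exactly where the $\pi/(9\alpha(1-\sigma))$ in the statement comes from. The remaining coefficients are $a_nK_{i\lambda\log((n+\alpha)/\alpha)}(2)$, and since $\pi\lambda/2=1-\sigma$ the decay factor $e^{-\pi t/2}$ at $t=\lambda\log((n+\alpha)/\alpha)$ equals $((n+\alpha)/\alpha)^{-(1-\sigma)}$, which renormalizes the exponent from $\sigma$ to $1$ and leaves coefficients of modulus at most $13.917M$ times the leading one; squaring gives the constant $194$ fed into Lemma 5. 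The extra factor $|\alpha^{\sigma+iT'+it}|=\alpha^{\sigma}$ is harmless. In short: insert the factor $\alpha^{s+it}$ before convolving and your argument goes through; without it, the step you yourself identify as the main obstacle does indeed fail.
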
 
\begin{proof}
  By convoluting $A(s,\alpha)$ with Ramachandra's kernal,
 we get in the same way as Theorem 5\footnote{This corresponds to $\alpha=1$.}.
\begin{gather*} 
  \frac {1} {2 \lambda} \int_{-\infty}^{\infty } \alpha^{s+it} 
    A(s+it,\alpha) \exp \p{-2 \cosh \p{\frac t \lambda}} dt = \\ \begin{split}&= \sum_{n=1}^N a_n K_{i \lambda (\log (n+\alpha)-\log \alpha)}(2) (n+\alpha)^{-s}, 
\\ &=  \sum_{n=1}^N b_n  (n+\alpha)^{-s}. \end{split}
\end{gather*}
The result follows by choosing 
$$ \lambda = \frac 2 \pi (1-\sigma)$$
and noticing that $K_0(2) \geq 1/9$ and the fact that
$$
 \sup_{t \geq 0}  \frac{\abs{K_{it}(2)e^{\pi t/2}}}{K_0(2)}=13.917, \qquad 13.917^2<194.
$$
That this expression is bounded follows from Lemma 2 and the constant follows from finding the maximum in Maple\footnote{This should possibly be done in a more rigid manner (without Maple).}.
\end{proof}

\begin{lem}
 Let $A(s,\alpha)$ be a Dirichlet polynomial such that 
    \begin{gather*}
     A(s,\alpha)= \alpha^{-s} +\sum_{n=1}^N a_n (n+\alpha)^{-s}, \qquad \qquad (N \geq 16),
  \end{gather*}
   and $\abs{a_n} \leq 1$. 
Then we have for $1/2 \leq \sigma<1$ that
\begin{gather*}
    \inf_T  \int_0^{\delta+\Delta} \abs{A(\sigma+it,\alpha)}dt \geq  \frac 1 {4 \alpha \delta  (1-\sigma)}  \p{1+ \frac {194 \alpha} {\delta} }^{-\frac{7}{6\delta}} 10^{-\frac 9 \delta}, \\ \intertext{where}
 \Delta = \frac{4}{\pi} (1-\sigma) \log \log N, \, \, \, \, \text{whenever}\, \, \, \,
     N \geq (1-\sigma) \delta^2 \p{1+ \frac {194\alpha}{\delta}}^{7/(3 \delta)} 10^{18/\delta}.
  \end{gather*}
\end{lem}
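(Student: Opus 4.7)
The plan is to combine Lemma 5 with Lemma 4 of the preceding section. Applying Lemma 5 with $M=1$ gives
\begin{gather*}
\inf_T \int_{-\infty}^\infty F_T(t) \exp\p{-2\cosh \frac{\pi t}{2(1-\sigma)}} dt \geq \varepsilon_0,
\end{gather*}
where
\begin{gather*}
F_T(t) = \int_t^{t+\delta} \abs{A(\sigma+i\tau+iT,\alpha)} d\tau, \qquad \varepsilon_0 = \frac{\pi}{9\alpha(1-\sigma)}\p{1+\frac{194\alpha}{\delta}}^{-7/(6\delta)} 10^{-9/\delta}.
\end{gather*}
The triangle inequality together with $\abs{a_n} \leq 1$ and $\sigma \geq 1/2$ then yields the crude sup-bound $\abs{F_T(t)} \leq C$ with $C \ll \delta(\alpha^{-1} + N^{1-\sigma}/(1-\sigma))$.

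Next, we apply Lemma 4 to $F_T$. Since the inequality above holds for every $T$, the shift $t \mapsto t - H/2$ appearing in the hypothesis of Lemma 4 can be absorbed into the infimum, yielding
\begin{gather*}
\inf_T \int_0^H F_T(t)\, dt \geq \varepsilon_0/2, \qquad H = \frac{4(1-\sigma)}{\pi} \log \log \frac{C(1-\sigma)}{\varepsilon_0}.
\end{gather*}
A standard Fubini argument (exchanging the orders of integration in the $t$ and $\tau$ variables) converts this into a lower bound for an integral of $\abs{A}$ itself, namely
\begin{gather*}
\int_0^{H+\delta} \abs{A(\sigma+it+iT,\alpha)}\, dt \geq \frac{\varepsilon_0}{2\delta},
\end{gather*}
since the resulting tent-shaped weight produced by Fubini is bounded by $\delta$.

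Finally, since $\abs{A} \geq 0$, this lower bound also applies to the integral over $[0, \Delta+\delta]$ provided $H \leq \Delta$. The inequality $H \leq \Delta$ reduces to $C(1-\sigma)/\varepsilon_0 \leq N$; substituting the bounds on $C$ and $\varepsilon_0$ then matches, up to absolute constants, the hypothesis $N \geq (1-\sigma)\delta^2(1+194\alpha/\delta)^{7/(3\delta)}10^{18/\delta}$.

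The main obstacle is the bookkeeping required to match the explicit numerical constant $1/(4\alpha\delta(1-\sigma))$ on the right-hand side, since a direct combination of the steps above yields only the slightly weaker constant $\pi/(18\alpha\delta(1-\sigma))$. Closing this small gap likely requires either a sharper Fubini step that exploits the rapid decay of $e^{-2\cosh}$ away from $t=0$ (so that the kernel is much smaller than its trivial bound of $1$ over most of $[0,H]$), or a slightly sharper formulation of Lemma 4.
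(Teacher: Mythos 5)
Your proposal follows essentially the same route as the paper: the weighted double-integral lower bound (the paper's Lemma 6, whose conclusion you re-derive from Lemma 5 with $M=1$), the crude sup-bound on $F_T$, an application of Lemma 4, the Fubini step that costs a factor $\delta$, and the verification $H \le \Delta$ from the stated lower bound on $N$. The constant discrepancy you flag at the end is not a defect of your argument: the paper's own proof likewise only yields $\pi/(18\alpha\delta(1-\sigma))$, which is weaker than the stated $1/(4\alpha\delta(1-\sigma))$, so the claimed constant appears to be a slip in the paper rather than something your method fails to capture.
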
 
\begin{proof}
  Let
  \begin{gather} \label{uiq}
     B(t)= \int_{t}^{t+\delta} \abs{A(\sigma+i\tau+it,\alpha)} d\tau.
    \\  \intertext{By estimating the Dirichlet polynomial $A(\sigma+i\tau+it)$ by its absolute values and integrating over $\tau$ it follows that}
  \notag     B(t) \leq \delta  \frac{N^{1-\sigma}}{1-\sigma}.
\intertext{By Lemma 4 and Lemma 6 it follows that} \label{utrt}
 \int_{0}^H B (t) dt \geq    \frac \pi {18 \alpha (1-\sigma)}  \p{1+ \frac {194 \alpha} {\delta} }^{-\frac{7}{6\delta}} 10^{-\frac 9 \delta},
  \intertext{for} \notag
H =\frac{4(1-\sigma)} {\pi} \log \log \p{\delta \frac{N^{1-\sigma}}{1-\sigma} \frac{(1-\sigma)}{\frac \pi {9 \alpha (1-\sigma)}  \p{1+ \frac {194 \alpha} {\delta} }^{-\frac{7}{6\delta}} 10^{-\frac 9 \delta}}}.
\end{gather}
The fact that 
\begin{gather*}
  H \leq \Delta=\frac {4(1-\sigma)}\pi \log \log N
\end{gather*}
follows from the fact that for $1/2 \leq \sigma \leq 1$ and $N \geq 4$ we have that $N^{1-\sigma} \leq  \sqrt N$, which follows from the lower bound for $N$ in the Lemma.
By  \eqref{uiq} and the triangle inequality we obtain that
$$
 \int_{0}^H B(t)  dt  \leq \delta \int_0^{H+\delta}  \abs{A(it+iT)} dt\leq \delta \int_0^{\Delta+\delta}  \abs{A(it+iT)} dt..
$$
The lemma follows by combining this with \eqref{utrt}.
\end{proof}

\begin{lem}
 Let $A(s,\alpha)$ be a Dirichlet polynomial such that 
    \begin{gather*}
     A(s,\alpha)= \alpha^{-s} +\sum_{n=1}^N a_n (n+\alpha)^{-s}
  \end{gather*}
   and $\abs{a_n} \leq 1$. 
Then we have for $0<\sigma<1$ that. Furthermore let $\delta>0$ and choose $0<\varepsilon<1$ so that $0<\varepsilon  H <0.05$. Then
\begin{gather*}
    \inf_T  \int_T^{T+H} \abs{A(\sigma+it,\alpha)}dt \geq  \frac 1 {4 \alpha (1-\sigma)}  \p{1+ \frac {194 \alpha} {H \varepsilon} }^{-\frac{7}{6H \varepsilon}} 10^{-\frac 9 {H \varepsilon}}, \\ \intertext{for}
 \sigma \geq 1-\frac{\pi H(1-\varepsilon)}{4 (\log \log N+1)}.
  \end{gather*}
 \end{lem}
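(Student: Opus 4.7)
The plan is to apply Lemma~7 with the choice $\delta = H\varepsilon$, and then enlarge the interval of integration from $[T, T+\delta+\Delta]$ to $[T, T+H]$ using non-negativity of the integrand. The condition $0 < H\varepsilon < 0.05$ is imposed precisely so that this $\delta$ lies in $(0,0.05)$, which is the range inherited from Lemma~5 through the whole chain.

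The first verification is the geometric inequality $\delta + \Delta \le H$, where $\Delta = \tfrac{4}{\pi}(1-\sigma)\log\log N$ is the quantity introduced in Lemma~7. From the hypothesis $1-\sigma \le \frac{\pi H(1-\varepsilon)}{4(\log\log N + 1)}$ one immediately gets
\[
 \Delta \le \frac{H(1-\varepsilon)\log\log N}{\log\log N + 1} \le H(1-\varepsilon),
\]
so that $\delta + \Delta \le H\varepsilon + H(1-\varepsilon) = H$. The factor $(1-\varepsilon)$ in the $\sigma$-hypothesis is exactly what leaves the residual length $H - \delta = H(1-\varepsilon)$ available to accommodate $\Delta$; in this sense the choice $\delta = H\varepsilon$ is forced by the shape of the statement.

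With $\delta + \Delta \le H$ established, non-negativity of $|A(\sigma+it,\alpha)|$ together with the inclusion $[T,T+\delta+\Delta] \subseteq [T,T+H]$ gives $\int_T^{T+H} |A(\sigma+it,\alpha)|\,dt \ge \int_T^{T+\delta+\Delta} |A(\sigma+it,\alpha)|\,dt$, and taking $\inf_T$ and applying Lemma~7 with $\delta = H\varepsilon$ yields
\[
 \inf_T \int_T^{T+H} |A(\sigma+it,\alpha)|\,dt \ge \frac{1}{4\alpha H\varepsilon(1-\sigma)} \left(1 + \frac{194\alpha}{H\varepsilon}\right)^{-7/(6H\varepsilon)} 10^{-9/(H\varepsilon)}.
\]
Since $H\varepsilon < 1$, the prefactor satisfies $\frac{1}{4\alpha H\varepsilon(1-\sigma)} \ge \frac{1}{4\alpha(1-\sigma)}$, which gives the stated bound.

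The main obstacle I foresee is the bookkeeping around Lemma~7's auxiliary requirement $N \ge (1-\sigma)\delta^2(1+194\alpha/\delta)^{7/(3\delta)} 10^{18/\delta}$, which is not stated as a hypothesis of Lemma~8. In the regime where this condition fails, Lemma~7 does not apply directly; the cleanest way to handle it is to observe that when $N$ is this small the $\sigma$-hypothesis forces $1-\sigma$ to be so small that the claimed bound can be recovered from the triangle inequality $|A(\sigma+it,\alpha)| \ge \alpha^{-\sigma} - \sum_{n=1}^{N}(n+\alpha)^{-\sigma}$, or, more likely, the statement of Lemma~8 is intended under an implicit lower bound on $N$ inherited from Lemma~7. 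This case-distinction is routine but is the only point that needs more than a direct invocation of Lemma~7.
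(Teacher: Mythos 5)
Your proof is essentially identical to the paper's: the paper likewise sets $\delta=\varepsilon H$ in Lemma~7, deduces $\Delta\leq(1-\varepsilon)H$ from the hypothesis on $\sigma$, and concludes $\delta+\Delta\leq H$. Your additional observations --- that the prefactor $\frac{1}{4\alpha\delta(1-\sigma)}$ from Lemma~7 dominates the stated $\frac{1}{4\alpha(1-\sigma)}$ since $\delta<1$, and that Lemma~7's lower bound on $N$ is an implicit hypothesis the paper silently omits --- are both correct and in fact more careful than the paper's own two-line argument.
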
 
\begin{proof}
 This follows by choosing $\delta=\varepsilon H,$
in Lemma 7, since the inequality for $\sigma$ gives us that
$\Delta\leq (1-\varepsilon)H,$
and thus $\delta+\Delta \leq H.$
\end{proof}

\subsection{Proof of Theorem 3}
Theorem 3 follows from Lemma 8 since $\zeta(\sigma+iT,\alpha)$ can be approximated by a Dirichlet polynomial of length $T$, similarly to \eqref{approx}, so we can choose $N=T$ and  
$\alpha=1$ minimizes the right hand side of the inequality in Lemma 8. 
\qed

\section{Further research and open problems}
From \eqref{ity} it follows that 
$$
  \int_{T}^{T+H} \abs{\zeta(1+it)} dt \gg \max(H^2,H).
$$
We may ask how far it is possible to extend this result to the critical strip.
\begin{prob}
 Is it possible to remove the $H^\varepsilon$ on the right hand side of Theorem 1 unconditionally?
\end{prob}
Theorem 2 shows that this can be done assuming the Riemann hypothesis and unconditionally for a shorter range in $\sigma$. It does not seem as  the methods of this paper can do this unconditionally for the full range of $\sigma$ in the theorem.

The general problem to find a lower bound for
$$\int_T^{T+H} \abs{\zeta(\sigma+it)} dt$$
is quite important for $1/2<\sigma<1$, and has applications on e.g. the multiplicity of zeta-zeros, see Ivi\'c \cite{Ivic3}. Our results give good estimates when $1-\sigma \leq H (\log \log T)^{-1-\varepsilon}$. In particular  we see that the range of $\sigma$ where we have good estimates depends on both $T$ and $H$. It is therefore natural to ask: 
\begin{prob}
  Is it possible to remove the dependence on $H$ in Theorem 1 for the range of $\sigma$ where the inequality is valid? Can we prove that
 $$\int_T^{T+H} \abs{\zeta(\sigma+it)} dt \gg \min(H^{2+\varepsilon},H), \qquad \p{\sigma \geq 1-(\log \log T)^{-1-\varepsilon}}?$$
\end{prob}
It is clear that this can be done if we just consider sufficiently large $T$.
\begin{cor}
  Let $\varepsilon>0$. Then we have that
  \begin{gather*}
   \liminf_{T \to \infty} \int_T^{T+H} \abs{\zeta(\sigma+it)} dt \gg \min(H^{2+\varepsilon},H), \, \, \, \,  \p{\sigma \geq 1-(\log \log T)^{-1-\varepsilon}}.
  \end{gather*} 
\end{cor}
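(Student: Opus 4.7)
The plan is to deduce the corollary directly from Theorem 1 by applying that theorem with a strictly smaller exponent. Fix $\varepsilon>0$ and invoke Theorem 1 with $\varepsilon/2$ in place of $\varepsilon$ (and, say, the free constant $C=1$). This yields, for every $T>16$ and every $\sigma$ satisfying $\sigma \geq 1-H(\log \log T)^{-1-\varepsilon/2}$,
$$\int_T^{T+H} \abs{\zeta(\sigma+it)}\,dt \gg \min\p{H^{2+\varepsilon/2},\,H},$$
with an implicit constant depending only on $\varepsilon$.

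The key observation is that, for each fixed $H>0$, we have $(\log \log T)^{-\varepsilon/2} \to 0$ as $T\to\infty$. Hence there exists a threshold $T_0=T_0(H,\varepsilon)$ such that $(\log \log T)^{-\varepsilon/2} \leq H$ for all $T \geq T_0$. For such $T$ this rearranges to
$$1-(\log \log T)^{-1-\varepsilon} \;\geq\; 1-H(\log \log T)^{-1-\varepsilon/2},$$
so every $\sigma$ in the range of the corollary automatically lies in the range where the application of Theorem 1 above is valid. Since $H^{2+\varepsilon/2}\geq H^{2+\varepsilon}$ for $0<H\leq 1$ and both quantities equal $H$ for $H\geq 1$, one has $\min(H^{2+\varepsilon/2},H)\geq \min(H^{2+\varepsilon},H)$ in either regime, and taking $\liminf_{T\to\infty}$ yields Corollary 1.

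There is no serious technical obstacle in this plan. The main point to highlight is that the threshold $T_0(H,\varepsilon)$ depends on $H$, which is harmless under a $\liminf$ over $T$ at fixed $H$ but prevents the argument from being upgraded to a uniform lower bound of the form requested in Problem 2. Removing that $H$-dependence in the range of $\sigma$ is precisely the open problem flagged by the author just before the statement.
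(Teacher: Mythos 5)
Your argument is correct and is essentially the paper's own proof, just written out in more detail: the paper likewise applies Theorem 1 with $\varepsilon/2$ in place of $\varepsilon$ and uses $\lim_{T\to\infty}(\log\log T)^{-\varepsilon/2}=0$ to absorb the corollary's range of $\sigma$ into that of Theorem 1. Your closing remark about the $H$-dependent threshold being the obstruction to Problem 2 matches the author's own discussion.
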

\begin{proof}
 This follows from Theorem 1, by choosing $\varepsilon$ in Theorem 1 to be half of the $\varepsilon$  in Corollary 1 and using the fact that $\lim_{T \to \infty} (\log \log T)^{-\varepsilon/2}=0$.
\end{proof}
Corresponding corollaries also follows from Theorem 3-4 by the same proof method (we remark that we already stated Theorem 2 in this manner). It is not too difficult to give some explicit estimate of $T$ depending on $H$ where problem 2 can be solved, e.g. we may prove the lower bound in Problem 2 for each  
$$
 T \geq \exp \p {\exp \p{H^{-1/\varepsilon}}}.
$$
However, we do not seem to get as sharp bounds for smaller $T$. It is easy to see that if we can answer problem 2 in the affirmative it would follow that all the zeroes $\rho=\sigma+it$ of the Riemann zeta-function for $\sigma \geq 1-(\log \log t)^{-1-\varepsilon}$ are simple.

While it might be possible to remove the dependence between $\sigma$ and $H$ in Theorem 1 as suggested by Problem 2,  universality results on vertical lines, see e.g. \cite{Andersson6} implies that it is not possible to prove Theorem 1 for $\Re(s)>\sigma$ for any fixed $\sigma<1$ and $H>0$, so it is not possible to remove the dependence between $\sigma$ and $T$.

\bibliographystyle{plain}

\end{document}